\documentclass[10pt,reqno]{amsart}
\usepackage{amsmath, amssymb, amsfonts, amsthm, xcolor}
\usepackage{mathtools}
\usepackage{enumerate}
\usepackage{hyperref}
\newtheorem{thm}{Theorem}[section]
\newtheorem{lem}[thm]{Lemma}
\newtheorem{prop}[thm]{Proposition}
\theoremstyle{definition}
\newtheorem{dfn}[thm]{Definition}
\newtheorem{exple}[thm]{Example}

\newtheorem{conj}[thm]{Conjecture}

\newtheorem{ques}[thm]{Question}

\newtheorem{remark}[thm]{Remark}
\theoremstyle{plain}
\newtheorem{cor}[thm]{Corollary}

\numberwithin{equation}{section}
\usepackage[a4paper, top = 1.2in, bottom = 1.2in, left = 1.2in, right = 1.2in]{geometry}
\numberwithin{equation}{section}

\newcommand{\N}{\mathbb{N}}
\newcommand{\Q}{\mathbb{Q}}

\newcommand{\Z}{\mathbb{Z}}
\newcommand{\F}{\mathbb{F}}

\newcommand{\mfm}{\mathfrak{m}}
\newcommand{\mfn}{\mathfrak{n}}
\newcommand{\mfp}{\mathfrak{p}}

\newcommand{\mfr}{\mathfrak{r}}

\newcommand{\m}{\mathfrak{m}}
\newcommand{\n}{\mathfrak{n}}
\newcommand{\p}{\mathfrak{p}}
\newcommand{\q}{\mathfrak{q}}

\newcommand{\GL}{\mathrm{GL}}
\newcommand{\Ker}{\mathrm{Ker}}

\newcommand{\Id}{\mathrm{Id}}
\newcommand{\Tr}{\mathrm{Tr}}
\newcommand{\new}{\mathrm{new}}
\newcommand{\old}{\mathrm{old}}

\newcommand{\lra}{\longrightarrow}
\newcommand{\ra}{\rightarrow}

\newcommand{\mrm}[1]{\mathrm{#1}}

\def\1{1\!\!1}

\newcommand{\psmat}[4]{\bigl( \begin{smallmatrix} #1 & #2 \\ #3 & #4 \end{smallmatrix} \bigr)}

\newcommand{\Spa}[1]{S_{k,l}(\Gamma_0(#1))}
\newcommand{\Tro}[2]{\mathrm{Tr}^{#1}_{#2}} 
\newcommand{\TroA}[2]{\mathrm{Tr^\prime}^{#1}_{#2}} 
\newcommand{\bstwomat}[2]{\bigl( \begin{smallmatrix} #1 \\ #2 \end{smallmatrix} \bigr)}
    
\def\dis{\displaystyle}

\title[Notes on Atkin-Lehner theory for Drinfeld modular forms]{Notes on Atkin-Lehner theory for Drinfeld modular forms}
\author[T. Dalal]{Tarun Dalal}
\email{ma17resch11005@iith.ac.in}
\address{
Department of Mathematics \\
Indian Institute of Technology Hyderabad\\
Kandi, Sangareddy - 502285\\
INDIA. 
}
\author[N. Kumar]{Narasimha Kumar}
\email{narasimha@math.iith.ac.in}
\address{
Department of Mathematics \\
Indian Institute of Technology Hyderabad\\
Kandi, Sangareddy - 502285\\
INDIA. 
}

\keywords{Drinfeld cusp forms,  Oldforms, Newforms, Atkin-Lehner operators, Hecke operators, Commutativity, Diagonalizability}
\subjclass[2010]{Primary 11F52, 11F25; Secondary 12J25, 11G09}
\date{\today}
\begin{document}
\begin{abstract}
 In this article, we settle a part of the Conjecture by Bandini and Valentino (\cite{BV19a}) for $S_{k,l}(\Gamma_0(T))$ when $\mrm{dim}\ S_{k,l}(\GL_2(A))\leq 2$.
Then, we frame this conjecture for prime, higher levels, and provide some evidence in favor of it. 
For any square-free level $\n$, we define oldforms $S_{k,l}^{\old}(\Gamma_0(\n))$, newforms $S_{k,l}^{\new}(\Gamma_0(\n))$, and investigate their properties. These properties depend on the commutativity of the (partial) Atkin-Lehner operators  with the $U_\p$-operators.  Finally, we show that the set of all $U_\p$-operators are simultaneously  diagonalizable on $S_{k,l}^{\new}(\Gamma_0(\n))$.
\end{abstract}
\maketitle
\section{Introduction}
\label{Introduction}
The theory of oldforms and newforms is a well-understood area in the theory of classical modular forms. 
Certain properties of modular forms heavily depend on whether they belong to oldforms or newforms. 
For example, the space of newforms  has a basis consisting of normalized eigenforms for all the Hecke operators. In fact, the Fourier coefficients of these normalized eigenforms generate a number field. To the best of author's knowledge,    the analogues theory of oldforms and newforms is not much known for Drinfeld modular forms.

In this article, we propose a definition of oldforms,  newforms for Drinfeld modular forms of square-free level. We justify these definitions  by showing that these spaces are invariant under the action of the Hecke operators.  The proof requires the commutativity of the (partial) Atkin-Lehner operators  with the $U_\p$-operators and  certain properties of the space of $\p$-oldforms and $\p$-newforms. 

In a series of papers (cf.~\cite{BV19},~\cite{BV19a},~\cite{BV20},~\cite{Val22}), Bandini and Valentino have defined  $\p$-oldforms, $\p$-newforms and studied some of their properties. 
In~\cite{BV19}, the authors defined $T$-oldforms $S_{k,l}^{T-\old}(\Gamma_0(T))$,
$T$-newforms $S_{k,l}^{T-\new}(\Gamma_0(T))$ for $\p = (T)$. 
In \cite{BV19a}, a sequel to~\cite{BV19}, they have made the following conjecture:
\begin{conj}(\cite[Conjecture 1.1]{BV19a} for $\Gamma_0(T)$) 
\label{level T conjecture}
\begin{enumerate}[(i)]
\item $\ker(T_T)=0$ where $T_T$ is acting on $S_{k,l}(\GL_2(A)),$
\item $S_{k,l}(\Gamma_0(T))=S_{k,l}^{T-\old}(\Gamma_0(T))\oplus S_{k,l}^{T-\new}(\Gamma_0(T))$,
\item $U_T$ is diagonalizable on $S_{k,l}(\Gamma_0(T))$.
\end{enumerate}
\end{conj}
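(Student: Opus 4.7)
My plan is to prove the three parts of Conjecture \ref{level T conjecture} sequentially, exploiting the dimension hypothesis $\dim S_{k,l}(\GL_2(A)) \leq 2$ to reduce abstract Hecke-theoretic claims to explicit linear-algebra verifications in a fixed basis.

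First, for part (i), I would fix an explicit basis of $S_{k,l}(\GL_2(A))$---using Gekeler-type generators of the ring of Drinfeld modular forms for $\GL_2(A)$---and write down the matrix of $T_T$ in this basis using the known formulas for the Hecke operators on $u$-expansions. In the case $\dim = 1$, injectivity of $T_T$ amounts to the non-vanishing of a single eigenvalue; in the case $\dim = 2$, it amounts to the non-vanishing of a single $2 \times 2$ determinant. Both reduce to a finite case analysis on the weight-type pair $(k,l)$, which in principle can be carried out from the explicit Hecke-operator formulas recalled in the paper.

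For part (ii), the plan is to exploit the standard relation $T_T = U_T + V_T \circ i$, where $i \colon S_{k,l}(\GL_2(A)) \hookrightarrow S_{k,l}(\Gamma_0(T))$ is the natural inclusion and $V_T$ is the $T$-shift operator, to link the level-$T$ operators back to the level-$\GL_2(A)$ Hecke operator. Under $\ker T_T = 0$ from part (i), this relation forces the two degeneracy maps $f \mapsto f$ and $f \mapsto V_T f$ to have linearly independent images, so that $\dim S_{k,l}^{T-\old}(\Gamma_0(T)) = 2\dim S_{k,l}(\GL_2(A))$. A dimension count against the definition of $S_{k,l}^{T-\new}(\Gamma_0(T))$ as the kernel of a trace-style operator should then give the direct-sum decomposition.

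For part (iii), the decomposition of (ii) reduces diagonalizability of $U_T$ to a separate question on each summand. On the oldform summand, each $T_T$-eigenblock $\langle f, V_T f \rangle$ carries $U_T$ as a $2 \times 2$ matrix whose characteristic polynomial has a discriminant that one can write explicitly in terms of the $T_T$-eigenvalue of $f$; diagonalizability then follows if this discriminant is nonzero, which reduces to a further case analysis in $(k,l)$. On the newform summand, I would invoke a partial Atkin-Lehner relation forcing $U_T^2$ to act by a nonzero scalar, so that $U_T$ is automatically diagonalizable there.

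The main obstacle is part (i): in positive characteristic the usual Petersson inner-product techniques are unavailable, so establishing $\ker T_T = 0$ even in the two-dimensional case requires careful control of the Hecke eigenvalues through their $u$-expansions. A secondary difficulty is the discriminant computation in (iii) on oldforms, where one must rule out repeated $U_T$-eigenvalues by a non-trivial case analysis tied back to the explicit form of $T_T$ from (i).
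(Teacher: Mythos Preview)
There is a genuine gap in your approach to part (ii). Knowing $\ker T_T = 0$ and hence $\dim S_{k,l}^{T-\old}(\Gamma_0(T)) = 2\dim S_{k,l}(\GL_2(A))$ does not yield the direct-sum decomposition by a dimension count: you must also show $S_{k,l}^{T-\old} \cap S_{k,l}^{T-\new} = \{0\}$, and this does \emph{not} follow from injectivity of $T_T$ alone. The paper's route is Proposition~\ref{Direct sum pm case} (a reformulation of Valentino's Theorem~\ref{direct sum decomposition as in Valentino}): the decomposition holds if and only if $T_T$ has no eigenform on $S_{k,l}(\GL_2(A))$ with eigenvalue $\pm T^{k/2}$. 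This is strictly stronger than $\ker T_T = 0$, and the bulk of the work in Theorems~\ref{Conjecture of BV for dimension 1} and~\ref{Direct sum decomposition dim 2 case} consists of ruling out precisely these eigenvalues by bounding the degree (respectively the norm) of carefully chosen low-index $u$-expansion coefficients of $T_T(g_1^x h^l)$, $T_T(g_1^x \Delta h^l)$, and their linear combinations. Your proposal never isolates this eigenvalue criterion, so the argument for (ii) does not close; indeed, Proposition~\ref{counter example for direct sum decomposition for square free p-part} shows that in the higher-level setting the intersection of old and new can be nonzero even when the relevant Hecke operator is injective.

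You also attempt more than the paper establishes. For $\dim S_{k,l}(\GL_2(A)) = 2$ the paper proves \emph{only} part (ii) (Theorem~\ref{Direct sum decomposition dim 2 case}); parts (i) and (iii) are left open in that case. Your ``finite case analysis'' for (i) is not actually finite---the relevant $(k,l)$ range over a family parametrized by $q$---and computing the full $2\times 2$ matrix of $T_T$ (hence its determinant) is considerably harder than the paper's technique of controlling a single Fourier coefficient. For (iii), diagonalizability of $U_T$ on oldforms is equivalent (cf.\ \cite[Remark~2.4]{BV20} and \S\ref{Discussion on (2)}) to $T_T$ being both injective \emph{and} diagonalizable on $S_{k,l}(\GL_2(A))$; in dimension $2$ the latter is nontrivial and the paper does not claim it.
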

In~\cite{BV19a},~\cite{BV22}, the authors proved Conjecture~\ref{level T conjecture} in some special cases, using harmonic cocycles, 
the trace maps $\mrm{Tr}$ and $\mrm{Tr}^\prime$, and
the linear algebra interpretation of the Hecke operators $T_\p$ 
and $U_\p$-operators.
In this article, by studying the action of the $T_T$-operators on the 
Fourier coefficients of Drinfeld modular forms, we prove:
\begin{thm}[Theorem~\ref{Conjecture of BV for dimension 1}, Theorem~\ref{Direct sum decomposition dim 2 case}]
\label{Conjecture of BV for dimension 1 introduction}
\label{Direct sum decomposition dim 2 case introduction}
If $\dim S_{k,l}(\GL_2(A))\leq 1$, then
Conjecture~\ref{level T conjecture} is true for $S_{k,l}(\Gamma_0(T))$.
Furthermore,
if $\dim S_{k,l}(\GL_2(A))=2$,
then $S_{k,l}(\Gamma_0(T))=S_{k,l}^{T-\old}(\Gamma_0(T))\oplus S_{k,l}^{T-\new}(\Gamma_0(T))$
holds.
\end{thm}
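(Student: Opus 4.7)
The plan is to carry out an explicit analysis of the $T_T$- and $U_T$-actions on the $u$-expansions at infinity. The starting point is the formula expressing the Fourier coefficients of $T_T f$ in terms of those of $f$: for $f = \sum_{n \geq 1} a_n(f) u^n \in S_{k,l}(\GL_2(A))$, one obtains $a_n(T_T f)$ as an explicit combination of $a_{Tn}(f)$ together with a secondary contribution supported on the indices with $T \mid n$. This concrete description is what drives the argument in every regime.

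For part (i) in the range $\dim S_{k,l}(\GL_2(A)) \leq 1$, I would fix a normalized generator $f$ and read off the leading coefficient of $T_T f$ from the above formula; showing that this coefficient does not vanish forces $T_T$ to be injective on a one-dimensional space. The same Fourier-coefficient method yields (i) in the two-dimensional case by writing $T_T$ as a $2 \times 2$ matrix on a chosen basis and verifying that its determinant does not vanish, again as a concrete identity among finitely many Fourier coefficients.

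Once (i) is available, the deduction of (ii) becomes structural. The two natural maps
\begin{equation*}
\iota_1, \iota_T : S_{k,l}(\GL_2(A)) \longrightarrow S_{k,l}(\Gamma_0(T)),
\end{equation*}
namely the inclusion and the $T$-twist $f \mapsto f_T$, produce the oldform subspace $S_{k,l}^{T-\old}(\Gamma_0(T))$ as their combined image. Injectivity of $\iota_1 \oplus \iota_T$ amounts to the linear independence of $f$ and $f_T$ for any nonzero $f$, which is transparent from the $u$-expansions, since $f$ and $f_T$ have different leading valuations at infinity. Combined with the definition of newforms via the kernel of the trace maps $\Tro{T}{1}$ and $\TroA{T}{1}$, and a dimension count against $\dim S_{k,l}(\Gamma_0(T))$, one obtains the direct sum decomposition (ii) uniformly for $\dim S_{k,l}(\GL_2(A)) \leq 2$.

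Part (iii) is the principal obstacle. On each two-dimensional oldform block spanned by $\iota_1(f)$ and $\iota_T(f)$ for an eigenform $f$, the operator $U_T$ acts by an explicit $2 \times 2$ matrix whose characteristic polynomial can be read off from the Fourier coefficients; diagonalizability reduces to the nonvanishing of its discriminant, together with a matching analysis on the newform piece. For $\dim S_{k,l}(\GL_2(A)) \leq 1$ this can be handled by direct inspection of the relevant coefficient, but the characteristic-$p$ features of the Drinfeld setting make the discriminant analysis delicate in dimension $2$, and this is why the theorem only claims (ii) in that case.
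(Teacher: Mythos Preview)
Your plan for (i) in the one-dimensional case is essentially what the paper does. The real gap is in your deduction of (ii). You write that injectivity of $\iota_1 \oplus \iota_T$ ``combined with the definition of newforms via the kernel of the trace maps $\Tro{T}{1}$ and $\TroA{T}{1}$, and a dimension count'' gives the direct sum. But what dimension count? You know $\dim S_{k,l}^{T-\old}(\Gamma_0(T)) = 2\dim S_{k,l}(\GL_2(A))$, yet you have no a priori formula for $\dim S_{k,l}^{T-\new}(\Gamma_0(T))$, and there is no Petersson inner product here to force $\old \cap \new = \{0\}$. In fact the paper later exhibits nontrivial intersections $\old \cap \new$ at higher level (Proposition~\ref{counter example for direct sum decomposition for square free p-part}), so this concern is not hypothetical. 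Injectivity of $T_T$ alone does not imply (ii); the two statements are logically independent in this setting.

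The paper's route to (ii) is different and does not go through (i). Via Valentino's criterion (Theorem~\ref{direct sum decomposition as in Valentino}), the direct sum holds iff $\Id - P^{k-2l}(\TroA{\p\m}{\m})^2$ is bijective; the paper reformulates this (Proposition~\ref{Direct sum pm case}) as: (ii) holds iff $T_T$ has no eigenform with eigenvalue $\pm T^{k/2}$. The Fourier-coefficient computations are then aimed at this sharper target, bounding $\deg a_{T_T f}(l)$ (and, in dimension~$2$, also $\deg a_{T_T f}(l+q-1)$) strictly below $k/2$, which simultaneously gives $\ker T_T = 0$ and rules out the eigenvalues $\pm T^{k/2}$. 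In dimension~$2$ the paper does \emph{not} prove (i); it only proves the eigenvalue condition needed for (ii), so your proposed determinant computation for (i) in that case is neither carried out nor needed.

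For (iii) in dimension $\leq 1$, the paper's argument is again structural rather than a discriminant check: once (i) and (ii) are known, diagonalizability of $U_T$ on $S_{k,l}^{T-\old}$ is equivalent to $T_T$ being diagonalizable and injective on $S_{k,l}(\GL_2(A))$ (cf.\ \cite[Remark~2.4]{BV20}), which is automatic in dimension $\leq 1$; and $U_T$ is always diagonalizable on $S_{k,l}^{T-\new}$ (cf.\ \cite[Remark~2.17]{BV20}).
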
 
Our methods in the proof of Theorem~\ref{Conjecture of BV for dimension 1} and Theorem~\ref{Direct sum decomposition dim 2 case} are completely different from that of~\cite{BV19a},~\cite{BV22}. Our methods are based on the analysis of the Fourier coefficients of the image of an element via the Hecke operator $T_T$. We are very optimistic that our methods are suitable for generalizations, i.e., when $\dim S_{k,l}(\GL_2(A)) \geq 3$.

In a continuation work (\cite{BV20}), for any prime ideal $\p$, the authors extended the definition of $\p$-oldforms, $\p$-newforms to level $\p$, level $\p\m$ with $\p\nmid \m$. So, it is quite natural  to understand
Conjecture~\ref{level T conjecture} for level $\p$, level $\p\m$ with $\p\nmid \m$. In this article, we frame it as a  question (cf.~Question~\ref{example of direct sum at level P}) and provide some evidences in favor of it. 

First, we generalize the results of~ \cite{BV19a} for $\p=(T)$ to any arbitrary prime ideal $\p$ (cf. Proposition~\ref{exple1}). This implies that Question~\ref{example of direct sum at level P} has an affirmative answer in these cases. Then,  we exhibit some cases where Question~\ref{example of direct sum at level P}  for the level $\p\m$  is true (cf.  Proposition~\ref{exple2}). Here, we bring a word of caution. If $\m\ne A$, we show that the direct sum decomposition in Question~\ref{example of direct sum at level P}(2) may fail when $l=1$ (cf. Proposition~\ref{counter example for direct sum decomposition for square free p-part},  Remark~\ref{counter example higher powers remark}). More precisely, we exhibit non-zero Drinfeld cusp forms which are both $\p$-oldforms and $\p$-newforms. We believe that this is the only case where it may fail, and in fact, it may serve as a guiding example in future works.

In the final section, we propose a definition of oldforms  $S_{k,l}^{\old} (\Gamma_0(\n))$ and newforms $S_{k,l}^{\new} (\Gamma_0(\n))$ 
for Drinfeld modular forms of square-free level $\n$. In fact, we justify our definition by showing that these spaces are invariant under the action of the Hecke operators (cf.~Theorem \ref{Oldspace and Newspace are stable under Hecke operators}). This requires the commutativity of the (partial) Atkin-Lehner operators with the $T_\p$ and $U_\p$-operators. For the $T_\p$-operators, this is exactly Theorem 1.1 of~\cite{Val22}.
Then, we prove an analogous result for the $U_\p$-operators (cf. Theorem~\ref{U_P and A-L commutes}).
Finally, we prove that the $U_\p$-operators  are simultaneously diagonalizable on $S_{k,l}^{\new} (\Gamma_0(\n))$ (cf. Corollary~\ref{Basis consisting of eigenforms of U_P operators}). 

\subsection*{Notations:}
Throughout the article, we use the following notations:
\begin{itemize}
\item Let $p$ be an odd prime number and $q=p^r$ for some $r \in \N$.
\item Let $k\in \N$ and $l\in \Z/(q-1)\Z$ such that $k\equiv 2l \pmod {q-1}$.
      Let $0 \leq l \leq q-2$ be a lift of $l \in \Z/(q-1)\Z$. By abuse of notation,
      we  write $l$ for the integer as well as its class. 
\end{itemize}
Let $\F_q$ denote the finite field of order $q$. Set $A :=\F_q[T]$,  
$K :=\F_q(T)$. 
Let $K_\infty=\F_q((\frac{1}{T}))$ be the completion of $K$ 
with respect to the infinite place $\infty$ (corresponding to $\frac{1}{T}$-adic valuation), and denote by $C:= \widehat{\overline{K_\infty}}$, the completion of $\overline{K_{\infty}}$.
Let $\mfp = (P)$ denote a prime ideal of $A$ with a monic irreducible
polynomial $P$.

\subsection*{An overview of the article}
The article is organized as follows. In \S\ref{Basic theory}, we recall some basic theory of Drinfeld modular forms.  In \S\ref{Certain important operators section}, we introduce certain important operators and study the inter-relations between them. In \S\ref{For an arbitrary level n with p| n } we prove 
Theorem \ref{Conjecture of BV for dimension 1 introduction}
and study the validity of Question~\ref{example of direct sum at level P} for prime,  higher levels. In \S\ref{Oldforms and Newforms for square free level n}, we define oldforms, newforms and show that they are invariant under the action of the  Hecke operators. 

\section*{Acknowledgments}  
The authors would like to thank Professor Andrea Bandini for his encouragement to check the validity of Conjecture~\ref{level T conjecture}, when $\mrm{dim}\ S_{k,l}(\GL_2(A))=2$. We would also like to thank Professor Francesc Bars for his valuable comments on an earlier version of this article.  The first author thanks University Grants Commission (UGC), India for the financial support provided in the form of Research Fellowship to carry out this research work at IIT Hyderabad. 


\section{Basic theory of Drinfeld modular forms}
\label{Basic theory}
In this section, we recall some basic theory of Drinfeld modular forms
(cf. \cite{Gos80}, \cite{Gos80a},  \cite{Gek88},  \cite{GR96} for more details).




Let $L=\tilde{\pi}A \subseteq C$ be the $A$-lattice of rank $1$ 
corresponding to the rank $1$ Drinfeld module (which is also called Carlitz module) 
$\rho_T=TX+X^q$,
where $\tilde{\pi}\in K_\infty(\sqrt[q-1]{-T})$ is defined up to a $(q-1)$-th root of unity.
The Drinfeld upper half-plane $\Omega= C-K_\infty$, which is analogue to the complex upper half-plane, has a rigid analytic structure. 
The group $\GL_2(K_\infty)$ acts on $\Omega$ via fractional linear transformations.

\begin{dfn}
	Let $k\in \N$, $l \in \Z/(q-1)\Z$ and $f:\Omega \ra C$ 
	be a rigid holomorphic function on $\Omega$. For any $\gamma=\psmat{a}{b}{c}{d}\in \GL_2(K_{\infty})$,
	we define the slash operator $|_{k,l} \gamma$ on $f$ by
	\begin{equation}\label{slash operator}
	f|_{k,l} \gamma := (\det \gamma)^{l}(cz+d)^{-k}f(\gamma z).
	\end{equation}
\end{dfn}
For an ideal $\mfn \subseteq A$, let $\Gamma_0(\mfn)$ denote the congruence subgroup 
$\{\psmat{a}{b}{c}{d}\in \GL_2(A): c\in \mfn \}$.
Now, we define a Drinfeld modular form of weight $k$, type $l$ for $\Gamma_0(\mfn)$:
\begin{dfn}
	\label{Definition of DMF}
	A rigid holomorphic function $f:\Omega \ra C$ is said to be a Drinfeld modular form of weight $k$, type $l$ 
	for $\Gamma_0(\mfn)$ if 
	\begin{enumerate}
		\item $f|_{k,l}\gamma= f$ , $\forall \gamma\in \Gamma_0(\mfn)$,
		\item $f$ is holomorphic at the cusps of $\Gamma_0(\mfn)$.
	\end{enumerate}
	The space of Drinfeld modular forms of weight $k$, type $l$ for $\Gamma_0(\mfn)$ is denoted by $M_{k,l}(\Gamma_0(\mfn)).$
	Furthermore, if $f$ vanishes at the cusps of $\Gamma_0(\mfn)$, then we say $f$ is a Drinfeld cusp form of
	weight $k$, type $l$ for $\Gamma_0(\mfn)$ and the space of such forms is denoted by $S_{k,l}(\Gamma_0(\mfn))$.
\end{dfn}

If $k\not \equiv 2l \pmod {q-1}$, then $M_{k,l}(\Gamma_0(\mfn))=\{0\}$. So, without loss of generality, we can assume that $k\equiv 2l \pmod {q-1}$.  Let $u(z) :=  \frac{1}{e_L(\tilde{\pi}z)}$, where $e_L(z):= z{\prod_{\substack{0 \ne \lambda \in L }}}(1-\frac{z}{\lambda}) $
be the exponential function attached to the lattice $L$. 
Then, each Drinfeld modular form  $f\in M_{k,l}(\Gamma_0(\mfn))$ has a unique $u$-series expansion at $\infty$
given by $f=\sum_{i=0}^\infty a_f(i)u^i$.
Since $\psmat{\zeta}{0}{0}{1} \in \Gamma_0(\n)$ for $\zeta \in \F_q^\times$, condition $(1)$ of Definition~\ref{Definition of DMF} 
implies $a_f(i)=0$ if $i\not\equiv l \pmod {q-1}$.
Hence, the $u$-series expansion of $f$ at $\infty$ can be written as
$\sum_{0 \leq \ i \equiv l \mod (q-1)} a_f(i)u^{i}.$ 
Any Drinfeld modular form of type $l \neq 0$ is a cusp form, i. e., $M_{k,l}(\Gamma_0(\n))=S_{k,l}(\Gamma_0(\n))$.

\subsection{Examples}
We now give some examples of Drinfeld modular forms. 
\begin{exple}[\cite{Gos80}, \cite{Gek88}]
	\label{Eisenstein Series}
	Let $d\in \N$. For $z\in \Omega$, the function
	\begin{equation*}
	g_d(z) := (-1)^{d+1}\tilde{\pi}^{1-q^d}L_d \sum_{\substack{a,b\in \F_q[T] \\ (a,b)\ne (0,0)}} \frac{1}{(az+b)^{q^d-1}}
	\end{equation*}
	is a Drinfeld modular form of weight $q^d-1$, type $0$ for $\GL_2(A)$,
	where $\tilde{\pi}$ is the Carlitz period and $L_d:=(T^q-T)\ldots(T^{q^d}-T)$ is the least common multiple of all monics of degree $d$.
	We refer $g_d$ as an Eisenstein series and it does not vanish at $\infty$.
\end{exple}

\begin{exple}[\cite{Gos80a}, \cite{Gek88}]
	\label{Delta-function}
	For $z\in \Omega$, the function
	\begin{equation*}
	\Delta(z) := (T-T^{q^2})\tilde{\pi}^{1-q^2}E_{q^2-1} + (T^q-T)^q\tilde{\pi}^{1-q^2}(E_{q-1})^{q+1},
	\end{equation*}
	is a Drinfeld cusp form of weight $q^2-1$,  type $0$ for $\GL_2(A)$,
	where $E_k(z)= \sum_{\substack{(0,0)\ne (a,b)\in A^2}}\frac{1}{(az+b)^k}.$
\end{exple}
\begin{exple}[\cite{Gek88}]
	\label{Poincare Series}
	For $z \in \Omega$, the function
	\begin{equation*}
	h(z) := \sum_{\gamma = \psmat{a}{b}{c}{d} \in H\char`\\ \GL_2(A)} \frac{\det \gamma}{(cz+d)^{q+1}}  u(\gamma z),
	\end{equation*}
	where $H=\big\{\psmat{*}{*}{0}{1}\in \GL_2(A)\big\}$,
is a Drinfeld cusp form of weight $q+1$, type $1$ for $\GL_2(A)$.
\end{exple}

We end this section by introducing an important function $E$, which is not modular.
In~\cite{Gek88}, Gekeler defined the function $E(z):= \frac{1}{\tilde{\pi}} \sum_{\substack{a\in \F_q[T] \\ a \ \mathrm{monic}}} ( \sum_{b\in \F_q[T]} \frac{a}{az+b}),$ which is analogous to the Eisenstein series of weight $2$ over $\Q$. For any prime ideal $\p=(P)$, using $E$, we can construct the following Drinfeld modular form
\begin{equation}
\label{E_T}
E_P(z) := E(z)- PE(Pz) \in S_{2,1}(\Gamma_0(\p)).
\end{equation} 
(cf. \cite[Proposition 3.3]{DK1} for a detailed discussion about $E_P$).

\section{Certain important operators}
\label{Certain important operators section}
In this section, we recall certain important operators and study their properties.
\subsection{Atkin-Lehner operators:}
Let $\mfr$, $\n$ be two ideals of $A$ generated by monic polynomials $r$, $n$, respectively, with $\mfr \mid \n$. The following definition can be found in~\cite[Page 331]{Sch96}.
\begin{dfn}
For $\mfr||\n$ (i.e., $\mfr|\n$ with $(\mfr, \frac{\n}{\mfr})=1$), 
the (partial) Atkin-Lehner operator $W_\mfr^{(\n)}$ is defined by the action of the matrix 
$\psmat {ar}{b}{cn}{dr}$ on $M_{k,l}(\Gamma_0(\n))$, where $a,b, c,d\in A$ such that $adr^2-bcn=\zeta\cdot  r$
for some $\zeta\in \F_q^\times$.
\end{dfn}
By~\cite[Proposition 3.2]{DK1}, the action of $W_\mfr^{(\n)}$ on $M_{k,l}(\Gamma_0(\n))$ is well-defined  (here the action of the slash operator is  different from the one in~\cite{DK1}).
Assume that $\p^\alpha||\n$ with $\alpha\in \N$. 
We now fix some representatives for the (partial) Atkin-Lehner operators $W_{\p^\alpha}^{(\n)}$ and $W_{\p^{\alpha-1}}^{(\m)}$.
    
\begin{dfn}
For  $f\in S_{k,l}(\Gamma_0(\n))$, we write
$f|_{k,l}{W_{\p^\alpha}^{(\n)}} := f|_{k,l} \psmat{P^\alpha}{b}{n}{P^\alpha d},$
where  $b,d\in A$ such that  $P^{2\alpha}d-nb= P^\alpha.$ 
Since $(P^\alpha,\frac{n}{P^\alpha})=1$, such $b,d \in A$ exist.

\end{dfn}

Write $n=Pm$ and $\m=(m)$.
When $\alpha \geq 2$, we can take $\psmat{P^{\alpha-1}}{b}{m}{P^\alpha d}$
as a representative for the (partial) Atkin-Lehner operator $W_{\p^{\alpha-1}}^{(\m)}$. 
\begin{lem}
\label{Atkin-Lehner operator is an involution}
The operator $|_{k,l} W_{\p^\alpha}^{(\n)}$  on $S_{k,l}(\Gamma_0(\n))$ defines an endomorphism
and  for all $f\in S_{k,l}(\Gamma_0(\n))$, we have  $(f|_{k,l}{W_{\p^\alpha}^{(\n)}})|_{k,l}{W_{\p^\alpha}^{(\n)}} = P^{\alpha(2l-k)}f$

\end{lem}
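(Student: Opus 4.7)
My plan is to split the claim into two parts: (a) $|_{k,l} W_{\p^\alpha}^{(\n)}$ maps $S_{k,l}(\Gamma_0(\n))$ into itself, and (b) applying it twice multiplies by the scalar $P^{\alpha(2l-k)}$. For part (a), I would rely on the well-definedness of the (partial) Atkin-Lehner operator recalled from \cite[Proposition 3.2]{DK1}: since any two admissible representatives of $W_{\p^\alpha}^{(\n)}$ differ by an element of $\Gamma_0(\n)$ on either side, the slash action does not depend on the choice of $b, d$, and the standard verification that $M \Gamma_0(\n) M^{-1} = \Gamma_0(\n)$ (up to scalars) for $M = \psmat{P^\alpha}{b}{n}{P^\alpha d}$ shows that $f|_{k,l}M$ is again $\Gamma_0(\n)$-invariant. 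Holomorphy at the cusps is preserved because $W_{\p^\alpha}^{(\n)}$ permutes the cusps of $\Gamma_0(\n)$.

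For part (b), the key computation is to evaluate $M^2$ explicitly, where $M := \psmat{P^\alpha}{b}{n}{P^\alpha d}$ with $P^{2\alpha} d - nb = P^\alpha$. A direct multiplication gives
\begin{equation*}
M^2 = \begin{pmatrix} P^{2\alpha} + bn & bP^\alpha(1+d) \\ nP^\alpha(1+d) & nb + P^{2\alpha} d^2 \end{pmatrix}.
\end{equation*}
Substituting $bn = P^{2\alpha} d - P^\alpha$ yields
\begin{equation*}
M^2 = P^\alpha \cdot N, \qquad N := \begin{pmatrix} P^\alpha(1+d) - 1 & b(1+d) \\ n(1+d) & P^\alpha d(1+d) - 1 \end{pmatrix},
\end{equation*}
and one checks using the relation once more that $\det N = (1+d)^2(P^{2\alpha}d - bn) - P^\alpha(1+d)\cdot(1+d) + 1 = 1$. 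Since the lower-left entry of $N$ is divisible by $n$, we get $N \in \Gamma_0(\n)$.

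To conclude I would apply the slash operator: using the multiplicativity $(f|_{k,l}M)|_{k,l}M = f|_{k,l}M^2$ and the factorization $M^2 = (P^\alpha I)\cdot N$, a short calculation for the scalar matrix gives $f|_{k,l}(P^\alpha I) = P^{\alpha(2l-k)} f$, and then the $\Gamma_0(\n)$-invariance of $f$ gives $f|_{k,l}N = f$. Combining, $(f|_{k,l}W_{\p^\alpha}^{(\n)})|_{k,l}W_{\p^\alpha}^{(\n)} = P^{\alpha(2l-k)} f$. The only mildly delicate point is keeping track of the definition of the slash operator \eqref{slash operator} on non-$\mathrm{GL}_2(A)$ matrices; everything else is a bookkeeping exercise with the relation $P^{2\alpha} d - nb = P^\alpha$.
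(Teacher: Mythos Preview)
Your proof is correct and follows the same approach as the paper, just with far more detail: the paper's proof is the single sentence ``Since $W_{\p^\alpha}^{(\n)} \cdot W_{\p^\alpha}^{(\n)} = \psmat{P^{\alpha}}{0}{0}{P^\alpha}\gamma$ for some $\gamma\in \Gamma_0(\n)$, the lemma follows,'' whereas you have explicitly computed $M^2 = P^\alpha N$ and verified $N\in\Gamma_0(\n)$. Your scalar computation $f|_{k,l}(P^\alpha I) = P^{\alpha(2l-k)}f$ and the handling of part (a) via \cite[Proposition 3.2]{DK1} are exactly what the paper leaves implicit.
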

\begin{proof}
Since $W_{\p^\alpha}^{(\n)} \cdot W_{\p^\alpha}^{(\n)} = \psmat{P^{\alpha}}{0}{0}{P^\alpha}\gamma$
for some $\gamma\in \Gamma_0(\n)$, the lemma follows.
\end{proof}

\begin{lem}
\label{A-L commutes among themselves}
For $i=1,2$, let $\p_i$ be two distinct prime ideals of $A$
such that $\p_i^{\alpha_i}||\n$ for some $\alpha_i\in \N.$
Then $W_{\p_1^{\alpha_1}}^{(\n)}W_{\p_2^{\alpha_2}}^{(\n)}=W_{\p_2^{\alpha_2}}^{(\n)}W_{\p_1^{\alpha_1}}^{(\n)}.$
\end{lem}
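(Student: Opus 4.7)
The plan is to reduce this to the fact that the Atkin-Lehner operator $W_{\p_1^{\alpha_1}\p_2^{\alpha_2}}^{(\n)}$ is well-defined (up to the choice of matrix representative) by the analogue of~\cite[Proposition 3.2]{DK1} cited just before Lemma~\ref{A-L commutes among themselves}. Concretely, I would argue that both composites $W_{\p_1^{\alpha_1}}^{(\n)} W_{\p_2^{\alpha_2}}^{(\n)}$ and $W_{\p_2^{\alpha_2}}^{(\n)} W_{\p_1^{\alpha_1}}^{(\n)}$ arise from matrix representatives of the \emph{same} Atkin-Lehner operator $W_{\p_1^{\alpha_1}\p_2^{\alpha_2}}^{(\n)}$, and so they agree as operators on $S_{k,l}(\Gn)$.

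First I would fix representatives $M_i = \psmat{a_i P_i^{\alpha_i}}{b_i}{c_i n}{d_i P_i^{\alpha_i}}$ for $W_{\p_i^{\alpha_i}}^{(\n)}$ with $\det M_i = \zeta_i P_i^{\alpha_i}$ for some $\zeta_i \in \F_q^\times$. Note that $\p_1^{\alpha_1}\p_2^{\alpha_2} \mid \n$ and, since $\p_1 \neq \p_2$ and $\p_i^{\alpha_i} \| \n$, we have $\bigl(\p_1^{\alpha_1}\p_2^{\alpha_2}, \tfrac{\n}{\p_1^{\alpha_1}\p_2^{\alpha_2}}\bigr)=1$; so $W_{\p_1^{\alpha_1}\p_2^{\alpha_2}}^{(\n)}$ is indeed a legitimate (partial) Atkin-Lehner operator.

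Next I would compute the product $M_1 M_2$ entry-by-entry and verify that it again has the Atkin-Lehner shape $\psmat{aP_1^{\alpha_1}P_2^{\alpha_2}}{b}{cn}{dP_1^{\alpha_1}P_2^{\alpha_2}}$. The key divisibilities are: the top-left entry $a_1 a_2 P_1^{\alpha_1}P_2^{\alpha_2} + b_1 c_2 n$ is divisible by $P_2^{\alpha_2}$ (from $P_2^{\alpha_2}\mid n$) and by $P_1^{\alpha_1}$, hence by $P_1^{\alpha_1}P_2^{\alpha_2}$ since these are coprime; the bottom-left entry is visibly divisible by $n$; the bottom-right entry $c_1 b_2 n + d_1 d_2 P_1^{\alpha_1}P_2^{\alpha_2}$ is handled identically to the top-left. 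The determinant is $\zeta_1\zeta_2 P_1^{\alpha_1}P_2^{\alpha_2}$, so $M_1 M_2$ is a valid representative for $W_{\p_1^{\alpha_1}\p_2^{\alpha_2}}^{(\n)}$. By the same computation applied to $M_2 M_1$, the same conclusion holds. Well-definedness then gives $f|_{k,l} M_1 M_2 = f|_{k,l} M_2 M_1$ for all $f \in S_{k,l}(\Gn)$.

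I do not expect any serious obstacle here; the only minor thing to watch is keeping the coprimality $\p_1^{\alpha_1}\p_2^{\alpha_2} \| \n$ in view so that the well-definedness result applies to the joint operator, and making sure the divisibility of the off-diagonal entry by $n$ really follows from $n\mid c_i n$ (it does, without needing any relation between $P_1,P_2$). Everything else is matrix arithmetic.
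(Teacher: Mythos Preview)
Your approach is correct and is exactly the paper's own argument: the paper's proof consists of the single line $W_{\p_1^{\alpha_1}}^{(\n)}W_{\p_2^{\alpha_2}}^{(\n)}=W_{\p_1^{\alpha_1}\p_2^{\alpha_2}}^{(\n)}=W_{\p_2^{\alpha_2}}^{(\n)}W_{\p_1^{\alpha_1}}^{(\n)}$, and what you have written simply unpacks why each product of representatives is again a representative of $W_{\p_1^{\alpha_1}\p_2^{\alpha_2}}^{(\n)}$.
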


\begin{proof}
The proof of the lemma follows from $W_{\p_1^{\alpha_1}}^{(\n)}W_{\p_2^{\alpha_2}}^{(\n)}=W_{\p_1^{\alpha_1}\p_2^{\alpha_2}}^{(\n)}=W_{\p_2^{\alpha_2}}^{(\n)}W_{\p_1^{\alpha_1}}^{(\n)}$.
\end{proof}

\subsection{Hecke operators:}
We now recall the definitions of $T_\p$ and $U_\p$-operators. 

\begin{dfn}
\label{Definitions of T_p and U_p operators}
For $f \in S_{k,l}(\Gamma_0(\n))$, we define
\begin{align*}
T_\p(f) &:= P^{k-l} \sum_{\substack{Q\in A\\ \deg Q<\deg P}}f|_{k,l}\psmat{1}{Q}{0}{P} + P^{k-l}f|_{k,l}\psmat{P}{0}{0}{1} \quad \mathrm{if} \ \p\nmid \n,\\
U_\p(f) &:= P^{k-l} \sum_{\substack{Q\in A\\ \deg Q < \deg P}}f|_{k,l}\psmat{1}{Q}{0}{P} \quad \mathrm{if} \ \p\mid \n.
\end{align*}
\end{dfn}
The commutativity of the $T_\mfp$ and $U_\p$-operators  is content of the next proposition.
\begin{prop}
\label{U_p, U_Q commutes and U_P, T_Q commutes}
Let $\n$ be an ideal of $A$ and $\mfp_1,\mfp_2$ be two distinct prime ideals of $A$ generated by  monic irreducible polynomials $P_1,P_2$ respectively. Suppose that $\p_1\mid \n$.
Then,  $U_{\p_1}$ commutes with $U_{\p_2}$ (resp., with $T_{\p_2}$) if ${\p_2} \mid \n$ (resp., if ${\p_2} \nmid \n$)
as operators on $S_{k,l}(\Gamma_0(\n))$.
\end{prop}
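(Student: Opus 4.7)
The plan is to compare both compositions by writing them out with the single-matrix representatives from Definition~\ref{Definitions of T_p and U_p operators}. For the first assertion (when $\p_2 \mid \n$), I would use the identity $\psmat{1}{R}{0}{P_2}\psmat{1}{Q}{0}{P_1} = \psmat{1}{Q+RP_1}{0}{P_1P_2}$ and observe that, by the division algorithm in $A$, the map $(Q,R)\mapsto Q+RP_1$ (with $\deg Q<\deg P_1$, $\deg R<\deg P_2$) is a bijection onto $\{S\in A:\deg S<\deg(P_1P_2)\}$. The same reasoning applied to the swapped composition yields the bijection $(Q,R)\mapsto R+QP_2$ onto the same target set. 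Hence both $U_{\p_1}U_{\p_2}(f)$ and $U_{\p_2}U_{\p_1}(f)$ reduce to $(P_1P_2)^{k-l}\sum_{\deg S<\deg(P_1P_2)} f|_{k,l}\psmat{1}{S}{0}{P_1P_2}$.

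For the second assertion (when $\p_2 \nmid \n$), I would expand $T_{\p_2}(U_{\p_1}(f))$ and $U_{\p_1}(T_{\p_2}(f))$ and separate the terms according to whether the $T_{\p_2}$-summand used is of the form $\psmat{1}{R}{0}{P_2}$ or $\psmat{P_2}{0}{0}{1}$. The contributions from the upper-triangular summands reduce, by the bijection argument just described, to a common expression. The remaining contributions are
\[
(P_1P_2)^{k-l}\sum_{\deg Q<\deg P_1} f|_{k,l}\psmat{P_2}{QP_2}{0}{P_1}
\qquad\text{and}\qquad
(P_1P_2)^{k-l}\sum_{\deg Q<\deg P_1} f|_{k,l}\psmat{P_2}{Q}{0}{P_1},
\]
coming respectively from $U_{\p_1}$ applied to $f|_{k,l}\psmat{P_2}{0}{0}{1}$, and from $f|_{k,l}\psmat{P_2}{0}{0}{1}$ applied after $U_{\p_1}$.

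The main obstacle, and the only step where the coprimality $(P_1,P_2)=1$ actually enters, is showing that these two sums agree. My plan is as follows: for each $Q$ with $\deg Q<\deg P_1$, let $Q'$ be the unique polynomial with $\deg Q'<\deg P_1$ and $Q'\equiv QP_2\pmod{P_1}$, and write $QP_2=Q'+sP_1$ for some $s\in A$. Then
\[
\psmat{P_2}{QP_2}{0}{P_1}=\psmat{1}{s}{0}{1}\psmat{P_2}{Q'}{0}{P_1},
\]
and the leftmost factor lies in $\Gamma_0(\n)$ since its lower-left entry is $0$, so its slash action on $f$ is trivial. Because $P_2$ is a unit modulo $P_1$, the assignment $Q\mapsto Q'$ is a bijection on residues mod $P_1$, so the two sums coincide and commutativity follows.
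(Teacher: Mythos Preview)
Your proposal is correct and follows essentially the same approach as the paper. Both arguments handle the upper-triangular summands via the division-algorithm bijection $(Q,R)\mapsto Q+RP_1$ onto $\{S:\deg S<\deg(P_1P_2)\}$, and both dispatch the extra $\psmat{P_2}{0}{0}{1}$-term by using that $P_2$ is a unit modulo $P_1$ to rewrite one coset representative as a $\Gamma_0(\n)$-translate of another; the paper records this as $\psmat{1}{b}{0}{P_1}\psmat{P_2}{0}{0}{1}=\psmat{1}{(b-b'P_2)/P_1}{0}{1}\psmat{P_2}{0}{0}{1}\psmat{1}{b'}{0}{P_1}$, which is the same identity you use after multiplying out.
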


\begin{proof}
Since $P_1$ and $P_2$ are distinct primes, for any $b\in A$ with $\deg b<\deg P_1$ there exists a unique $b^\prime\in A$ with $\deg b^\prime < \deg P_1$ such that $P_1|(b- b^\prime P_2)$. Thus, $\psmat{1}{\frac{b-b^\prime P_2}{P_1}}{0}{1}\in \Gamma_0(\n)$ and $\psmat{1}{b}{0}{P_1}\psmat{P_2}{0}{0}{1} = \psmat{1}{\frac{b-b^\prime P_2}{P_1}}{0}{1}  \psmat{P_2}{0}{0}{1}\psmat{1}{b^\prime}{0}{P_1}.$ 
Now the result follows from Definition \ref{Definitions of T_p and U_p operators} and the following equality 
$\small \dis \sum_{\substack{b\in A\\ \deg b<\deg P_1}}\sum_{\substack{d\in A\\ \deg d<\deg P_2}} \psmat{1}{b+dP_1}{0}{P_1P_2}=\sum_{\substack{c\in A \\ \deg c<\deg P_1 + \deg P_2}} \psmat{1}{c}{0}{P_1P_2}=\sum_{\substack{d^\prime\in A\\ \deg d^\prime<\deg P_2}} \sum_{\substack{b^\prime\in A\\ \deg b^\prime<\deg P_1}} \psmat{1}{d^\prime+b^\prime P_2}{0}{P_1P_2}.$
\end{proof}

\subsection{The Trace operators}
We define the trace operators and mention some of its properties.
\begin{dfn}
For any ideal $\mfr|\n$, we define the trace operator
$\Tr_{\mfr}^\n : M_{k,l}(\Gamma_0(\n)) \longrightarrow M_{k,l}(\Gamma_0(\mfr))$
by
$\Tr_{\mfr}^\n(f) = \sum_{\gamma\in \Gamma_0(\n)\char`\\ \Gamma_0(\mfr)} f|_{k,l}\gamma. $
\end{dfn}
We conclude this section with a proposition where we explicitly compute the action of the trace operator in terms of the (partial) Atkin-Lehner operators and the Hecke operators.
\begin{prop}
\label{relation between Trace, A-L and U_P operator for level pm}
\label{relation between Trace Atkin_Lehner and U_p operators}
Let $\mfm,\mfn$ be two ideals of $A$ generated by monic polynomials $m, n$, respectively,
such that $n=Pm$. Let $\alpha \in \N$ such that $P^\alpha||n$. If $f\in S_{k,l}(\Gamma_0(\n)),$ then
$$\Tro{\n}{\m}(f)= 
\begin{cases}
f+ P^{-l}U_\p(f|W_\p^{(\n)})  \qquad \qquad  \qquad \quad \ \quad \mathrm{if} \ \alpha=1,\\
P^{-l-(\alpha-1)(2l-k)}U_\p(f|W^{(\n)}_{\p^\alpha})|_{k,l}W^{(\m)}_{\p^{\alpha-1}} \quad \mathrm{if} \ \alpha\geq 2.
\end{cases}
$$
\end{prop}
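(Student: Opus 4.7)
The plan is to compute $\Tro{\n}{\m}(f)$ directly from the defining sum $\sum_\gamma f|_{k,l}\gamma$, after fixing an explicit system of coset representatives for $\Gamma_0(\n)\backslash \Gamma_0(\m)$. Via the bottom-row reduction $\Gamma_0(\m)\to \mathbb{P}^1(A/\p)$, the index equals $q^{\deg P}+1$ when $\alpha = 1$ (so $\p\nmid \m$) and $q^{\deg P}$ when $\alpha \geq 2$ (so $\p\mid \m$); this dichotomy is exactly what produces the two cases in the statement.

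For $\alpha = 1$, I would take $\{I\}\cup\{\gamma_Q : \deg Q < \deg P\}$ as representatives, where $\gamma_Q := \psmat{1}{Q+b}{m}{mQ+Pd}$ arises from the factorisation $W_\p^{(\n)}\psmat{1}{Q}{0}{P} = P\cdot \gamma_Q$ (and $\det\gamma_Q = Pd - mb = 1$ follows from $P^2 d - nb = P$). Distinctness is visible on the bottom rows modulo $\p$, which give $[0:1]$ and $[1:Q]$ in $\mathbb{P}^1(A/\p)$. Applying $|_{k,l}$ to the factorisation yields $f|_{k,l}\gamma_Q = P^{k-2l}(f|W_\p^{(\n)})|_{k,l}\psmat{1}{Q}{0}{P}$, and summing over $Q$ against the definition of $U_\p$ (which provides a factor $P^{k-l}$) gives $\sum_Q f|_{k,l}\gamma_Q = P^{-l} U_\p(f|W_\p^{(\n)})$, whence the $\alpha = 1$ formula.

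For $\alpha\geq 2$, the representatives are $\psmat{1}{0}{mQ}{1}$ for $\deg Q < \deg P$, since $\psmat{1}{0}{m(Q-Q')}{1}\in \Gamma_0(\n)$ iff $P\mid Q-Q'$. Writing $m = P^{\alpha-1}m'$ with $\p\nmid m'$, the core step is to expand the triple product $W_{\p^\alpha}^{(\n)}\psmat{1}{Q}{0}{P}W_{\p^{\alpha-1}}^{(\m)}$. Using the two determinant relations $P^{2\alpha}d - nb = P^\alpha$ and $P^{2\alpha-1}d - mb = P^{\alpha-1}$, one checks that every entry is divisible by $P^\alpha$, producing a factorisation $P^\alpha\cdot N_Q$ with $N_Q\in \SL_2(A)$. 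I would then find $Q'$ so that $\eta_Q := N_Q\psmat{1}{0}{-mQ'}{1} \in \Gamma_0(\n)$: the $(2,1)$-entry reduces modulo $\p$ to $m(1+m'Q+Q')$, so the condition becomes $Q' \equiv -1 - m'Q \pmod P$; because $m'$ is a unit modulo $\p$, this defines a bijection on residues modulo $\p$. Assembling everything, the $U_\p$-factor $P^{k-l}$ and the scalar $P^\alpha I$ combine to yield
$$U_\p(f|W_{\p^\alpha}^{(\n)})|_{k,l}W_{\p^{\alpha-1}}^{(\m)} = P^{k-l+\alpha(2l-k)}\Tro{\n}{\m}(f),$$
and the identity $l-k-\alpha(2l-k) = -l-(\alpha-1)(2l-k)$ produces the second case. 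The main obstacle is this triple-product bookkeeping: checking $P^\alpha$-divisibility of all four entries and then extracting the bijection $Q \leftrightarrow Q'$ modulo $\p$ both depend delicately on the two determinant identities satisfied by the chosen representatives of $W_{\p^\alpha}^{(\n)}$ and $W_{\p^{\alpha-1}}^{(\m)}$.
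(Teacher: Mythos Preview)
Your proposal is correct and follows essentially the same strategy as the paper: both arguments relate the trace sum to the $U_\p$-sum via an explicit matrix factorisation of $W_{\p^\alpha}^{(\n)}\psmat{1}{Q}{0}{P}$ over a system of coset representatives for $\Gamma_0(\n)\backslash\Gamma_0(\m)$. The only tactical differences are that for $\alpha\ge 2$ the paper uses the representatives $\psmat{1-mQ}{P^{\alpha-1}Q}{-m^2Q/P^{\alpha-1}}{1+mQ}$ and the double-product factorisation $W_{\p^\alpha}^{(\n)}\psmat{1}{Q}{0}{P}=\psmat{P}{0}{0}{P}\cdot(\text{rep})\cdot W_{\p^{\alpha-1}}^{(\m)}$, finishing with Lemma~\ref{Atkin-Lehner operator is an involution}, whereas you use the lower-unipotent representatives $\psmat{1}{0}{mQ}{1}$ and absorb the involution step into your triple product; for $\alpha=1$ the paper simply cites \cite[Proposition~3.6]{DK1}, which is exactly the computation you outline.
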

\begin{proof}
If $\alpha=1$, then this proposition is exactly~\cite[Proposition $3.6$]{DK1}. When $\n$ is a prime ideal, this coincides with~\cite[Proposition 3.8]{Vin14}. Note that, the action of the slash operator here is different from there.

Now, we let  $\alpha\geq 2$. 
By definition, we have
\begin{align*}
U_P(f|W^{(\n)}_{\p^\alpha}) &= P^{k-l} \sum\limits_{\deg Q<\deg P}f|_{k,l}\psmat{P^\alpha}{b}{n}{P^\alpha d}\psmat{1}{Q}{0}{P}
 = P^{k-l} \sum\limits_{\deg Q<\deg P}f|_{k,l} \psmat{P^\alpha}{P^\alpha Q+bP}{n}{nQ+P^{\alpha+1}d}\\
&= P^{k-l} \sum\limits_{\deg Q<\deg P}f|_{k,l}\psmat{P}{0}{0}{P}\psmat{P^{\alpha-1}}{P^{\alpha-1} Q+b}{m}{mQ+P^{\alpha}d}
= P^{l} \sum\limits_{\deg Q<\deg P}f|_{k,l}\psmat{P^{\alpha-1}}{P^{\alpha-1} Q+b}{m}{mQ+P^{\alpha}d}\\
&= P^{l} \sum\limits_{\deg Q<\deg P}f|_{k,l} \psmat{1-mQ}{P^{\alpha-1} Q}{-\frac{m^2}{P^{\alpha-1}}Q}{1+mQ} \psmat{P^{\alpha-1}}{b}{m}{P^\alpha d}.
\end{align*}
We now show that $\big\{\psmat{1-mQ}{P^{\alpha-1} Q}{-\frac{m^2}{P^{\alpha-1}}Q}{1+mQ} : \deg Q<\deg P\big\}$ is a set of representatives for  $\Gamma_0(\n)\char`\\ \Gamma_0(\m)$. Let $\psmat{s}{t}{mx}{y}\in \Gamma_0(\m),$ where $s,t,x,y \in A$ satisfy $sy-tmx=\zeta \in \F_q^\times$.
Let $-\zeta^{-1}sx \equiv Q_1 \pmod P$, where $Q_1\in A$ such that $\deg Q_1< \deg P.$ 
Since $P^{\alpha-1}||m$, there exists an unique $Q_2\in A$ with $\deg Q_2< \deg P$ such that $\frac{m}{P^{\alpha-1}}Q_2\equiv 1 \pmod P.$ 
Since $P|m$, the choice of $Q_1$ and $sy-tmx=\zeta \in \F_q^\times$
implies that $x+yQ_1\equiv 0 \pmod P.$
Let $Q\in A$ with $\deg Q< \deg P$ such that $Q_1Q_2\equiv Q\pmod P$. 
Then $x+\frac{m}{P^{\alpha-1}}Qy \equiv 0 \pmod P$.
Hence, we get $\psmat{s(1+mQ)+t\frac{m^2}{P^{\alpha-1}}Q}{t(1-mQ)-sP^{\alpha-1}Q}{mx(1+mQ)+y\frac{m^2}{P^{\alpha-1}}Q}{y(1-mQ)-mxP^{\alpha-1}Q}\in \Gamma_0(\n)$ and 
$$\psmat{s}{t}{mx}{y}=\psmat{s(1+mQ)+t\frac{m^2}{P^{\alpha-1}}Q}{t(1-mQ)-sP^{\alpha-1}Q}{mx(1+mQ)+y\frac{m^2}{P^{\alpha-1}}Q}{y(1-mQ)-mxP^{\alpha-1}Q}\psmat{1-mQ}{P^{\alpha-1} Q}{-\frac{m^2}{P^{\alpha-1}}Q}{1+mQ}.$$
Thus, the set 
$\big\{\psmat{1-mQ}{P^{\alpha-1} Q}{-\frac{m^2}{P^{\alpha-1}}Q}{1+mQ} : \deg Q<\deg P\big\}$
forms a complete set of representatives for $\Gamma_0(\n)\char`\\ \Gamma_0(\m)$. Therefore 
\begin{equation*}
U_P(f|W^{(\n)}_{\p^\alpha})=P^{l} \sum\limits_{\deg Q<\deg P}f|_{k,l} \psmat{1-mQ}{P^{\alpha-1} Q}{-\frac{m^2}{P^{\alpha-1}}Q}{1+mQ} \psmat{P^{\alpha-1}}{b}{m}{P^\alpha d} = P^{l}(\Tr^\n_\m f)|_{k,l}W^{(\m)}_{\p^{\alpha-1}}.
\end{equation*}
Applying $W^{(\m)}_{\p^{\alpha-1}}$ operator on both sides, the proposition
follows from Lemma~\ref{Atkin-Lehner operator is an involution}.
\end{proof}

\begin{cor}
\label{Tr of pm to m of level m}
Let $\p,\m$ be with $(\p,\m)=1$. If $f\in S_{k,l}(\Gamma_0(\p))$, then
$\Tro{\p\m}{\m}(f)= \Tro{\p}{1}(f).$
\end{cor}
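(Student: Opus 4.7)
The plan is to build a natural bijection between the two coset spaces that define the traces, and then invoke the $\Gamma_0(\p)$-invariance of $f$ to conclude.

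First, I would view $f$ as an element of $S_{k,l}(\Gamma_0(\p\m))$ via the inclusion $\Gamma_0(\p\m)\subseteq \Gamma_0(\p)$, which makes $\Tro{\p\m}{\m}(f)$ well-defined. Then I would consider the map
\begin{equation*}
\iota:\Gamma_0(\p\m)\backslash \Gamma_0(\m)\longrightarrow \Gamma_0(\p)\backslash \GL_2(A), \qquad \Gamma_0(\p\m)\gamma \longmapsto \Gamma_0(\p)\gamma.
\end{equation*}
Well-definedness is immediate from $\Gamma_0(\p\m)\subseteq \Gamma_0(\p)$. For injectivity, if $\gamma_1,\gamma_2\in \Gamma_0(\m)$ lie in the same $\Gamma_0(\p)$-coset, then $\gamma_1\gamma_2^{-1}\in \Gamma_0(\p)\cap \Gamma_0(\m)$; a quick look at the bottom-left entry shows this intersection equals $\Gamma_0(\p\m)$, precisely because $(\p,\m)=1$.

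Next, to obtain bijectivity I would compare cardinalities. The index $[\GL_2(A):\Gamma_0(\mfr)]$ is multiplicative on coprime ideals (via CRT applied to $\GL_2(A/\mfr)$), so $[\Gamma_0(\m):\Gamma_0(\p\m)]=[\GL_2(A):\Gamma_0(\p)]=|\p|+1$. Since $\iota$ is an injection between two finite sets of the same cardinality, it must be a bijection.

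Finally, because $f\in S_{k,l}(\Gamma_0(\p))$, the value $f|_{k,l}\gamma$ depends only on the coset $\Gamma_0(\p)\gamma$. Reindexing the trace sum via $\iota$ then gives
\begin{equation*}
\Tro{\p\m}{\m}(f)=\sum_{\gamma\in \Gamma_0(\p\m)\backslash \Gamma_0(\m)}f|_{k,l}\gamma = \sum_{\bar\gamma\in \Gamma_0(\p)\backslash \GL_2(A)}f|_{k,l}\bar\gamma = \Tro{\p}{1}(f).
\end{equation*}
The argument is essentially formal; the hypothesis $(\p,\m)=1$ enters twice, namely to identify $\Gamma_0(\p)\cap \Gamma_0(\m)$ with $\Gamma_0(\p\m)$ and to split the index multiplicatively, and no serious obstacle is anticipated.
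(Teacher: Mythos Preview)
Your argument is correct, but it follows a different route from the paper's. The paper derives this corollary in one line from Proposition~\ref{relation between Trace Atkin_Lehner and U_p operators}: applying the formula $\Tro{\n}{\m}(f)=f+P^{-l}U_\p(f|W_\p^{(\n)})$ once with $\n=\p\m$ and once with $\n=\p$, it observes that $f|W_\p^{(\p\m)}=f|W_\p^{(\p)}$ because the two Atkin--Lehner representatives differ by an element of $\Gamma_0(\p)$ and $f$ has level $\p$. Your approach bypasses the Atkin--Lehner and $U_\p$ machinery entirely and works directly with coset spaces, building the bijection $\Gamma_0(\p\m)\backslash\Gamma_0(\m)\to\Gamma_0(\p)\backslash\GL_2(A)$ by hand. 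This is more elementary and self-contained, and makes transparent exactly where the coprimality hypothesis enters; the paper's version is quicker given that the trace formula is already in place, and it ties the corollary into the operator formalism that drives the rest of the paper.
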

\begin{proof}
Since $f|W^{(\p\m)}_\p=f|W^{(\p)}_\p$, the result follows from Proposition~\ref{relation between Trace Atkin_Lehner and U_p operators}.
\end{proof}

\section{$\p$-oldforms and $\p$-newforms for level $\p\m$}
\label{For an arbitrary level n with p| n }
%
%


Let $\p$ be a prime ideal of $A$. Throughout this section, we consider $\m$  an ideal of $A$ generated by a monic polynomial $m$ such that $\p\nmid \m$. We first recall the definitions of $\p$-oldforms and $\p$-newforms (cf.~\cite{BV20},~\cite{Val22}). 
Consider the map
\begin{align*}
(\delta_1, \delta_P): (S_{k,l}(\Gamma_0(\m)))^2  \lra S_{k,l}(\Gamma_0(\p\m))\ \mathrm{defined \ by} \ 
(f,g)  \lra \delta_1f + \delta_P g,
\end{align*} 
where 
$\delta_1$, $\delta_P : S_{k,l}(\Gamma_0(\m)) \ra S_{k,l}(\Gamma_0(\p\m))$ given by 
$\delta_1(f)=f$ and $\delta_P(f)= f|_{k,l}\psmat{P}{0}{0}{1}.$

\begin{dfn}
The space of $\p$-oldforms $S_{k,l}^{\p-\old}(\Gamma_0(\p\m))$ of level $\p\m$ is defined 
as the subspace of $S_{k,l}(\Gamma_0(\p\m))$ generated by the image of $(\delta_1, \delta_P)$.
\end{dfn}

\begin{dfn}
The space of $\p$-newforms $S_{k,l}^{\p-\new}(\Gamma_0(\p\m))$ of level $\p\m$ is defined as 
$$S_{k,l}^{\p-\new}(\Gamma_0(\p\m)):= \Ker(\Tro{\p\m}{\m})\cap \Ker(\TroA{\p\m}{\m}), \quad \mathrm{where} \ \TroA{\p\m}{\m} f : = \Tro{\p\m}{\m}(f|W_\p^{(\p\m)}).$$
\end{dfn}

We wish to understand Conjecture~\ref{level T conjecture} for prime $\p$, higher levels $\p\m$. We now formulate it as a question and provide some evidences in favor of it. More precisely:
\begin{ques}[For level $\p\m$]
Suppose  $\m$ is an ideal of $A$ such that $\p \nmid \m.$ 
\label{example of direct sum at level P}
 
\begin{enumerate}
\item 
 $\ker(T_\p)=0$, where $T_\p  \in \mrm{End}(S_{k,l}(\Gamma_0(\m))),$
\item 
\begin{equation} 
  \label{direct sum decomposition level pm in corollary}
  S_{k,l}(\Gamma_0(\p\m))=S_{k,l}^{\p-\old}(\Gamma_0(\p\m))\oplus S_{k,l}^{\p-\new}(\Gamma_0(\p\m)),      
\end{equation}
\item 
The $U_\p$-operator is diagonalizable on $S_{k,l}(\Gamma_0(\p\m)).$
\end{enumerate}
\end{ques}
When we say that ``Question~\ref{example of direct sum at level P} is true for level $\p\m$", we mean  all the statements of Question~\ref{example of direct sum at level P} are true. We first show, if $\m=A, \p = (P)$ with $\deg P=1$, then Question~\ref{example of direct sum at level P} is true for level $\p$ if $\dim \ S_{k,l}(\GL_2(A))\leq 1$.  In particular, Conjecture~\ref{level T conjecture} is true for $S_{k,l}(\Gamma_0(T))$. Furthermore, we show the direct sum decomposition in Question~\ref{example of direct sum at level P}(2) holds for $S_{k,l}(\Gamma_0(\p))$ if $\dim S_{k,l}(\GL_2(A))\leq 2$. Finally, we give some evidences in the support of Question~\ref{example of direct sum at level P} for level $\p\m$.

\subsection{Question~\ref{example of direct sum at level P} when $\mathbf{dim}\ S_{k,l}(\GL_2(A))\leq 2$:}
\label{Discussion on (2)}
We now discuss on  implications of Question~\ref{example of direct sum at level P}(1),(2) to Question~\ref{example of direct sum at level P}(3).
If Question~\ref{example of direct sum at level P}(2) is true, then  the diagonalizability of the $U_\p$-operator on $S_{k,l}(\Gamma_0(\p\m))$ depends on that of the $U_\p$-operators on $S_{k,l}^{\p-\old}(\Gamma_0(\p\m))$,
$S_{k,l}^{\p-\new}(\Gamma_0(\p\m)).$
By~\cite[Remark 2.17]{BV20}, the $U_\p$-operator is diagonalizable on $S_{k,l}^{\p-\new}(\Gamma_0(\p\m))$. However, the $U_\p$-operator   is diagonalizable on $S_{k,l}^{\p-\old}(\Gamma_0(\p\m))$
if and only if the $T_{\p}$-operator is diagonalizable on $S_{k,l}(\Gamma_0(\m))$ and is injective (cf.~\cite[Remark 2.4]{BV20}). Therefore, if Question~\ref{example of direct sum at level P}(1),(2) are true, then
Question~\ref{example of direct sum at level P}(3) is equivalent to check the diagonalizability of the $T_\p$-operator on $S_{k,l}(\Gamma_0(\m)).$

\subsubsection{Reformulation of Question~\ref{example of direct sum at level P}(2)}
In~\cite{Val22}, Valentino gave a necessary and sufficient condition for Question~\ref{example of direct sum at level P}(2) to hold. More precisely: 
 \begin{thm}\cite[Theorem 3.15]{Val22}
\label{direct sum decomposition as in Valentino}
The map $\Id- P^{k-2l}(\TroA{\p\m}{\m})^2$ is bijective on $S_{k,l}(\Gamma_0(\p\m))$ if and only if 
Question~\ref{example of direct sum at level P}(2) holds.
\end{thm}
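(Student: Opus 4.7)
The plan is to view $T := \Tro{\p\m}{\m}$ and $T' := \TroA{\p\m}{\m}$ as endomorphisms of $V := S_{k,l}(\Gamma_0(\p\m))$ by composing their outputs with the natural inclusion $\delta_1 : S_{k,l}(\Gamma_0(\m)) \hookrightarrow V$, so that $M := \Id - P^{k-2l}(T')^2$ is a well-defined endomorphism of $V$. The foundational observations are: (i) $T(V), T'(V) \subseteq \delta_1(S_{k,l}(\Gamma_0(\m))) \subseteq V_{\old}$, hence $N := P^{k-2l}(T')^2$ factors through $V_{\old}$; and (ii) $T'$ vanishes on $V_{\new}$ by definition, so $M|_{V_{\new}} = \Id_{V_{\new}}$. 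Together (i) and (ii) yield $M(V_{\old}) \subseteq V_{\old}$ and $M^{-1}(V_{\old}) = V_{\old}$.

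Next I would establish a reduction lemma: $M$ is bijective on $V$ if and only if $M|_{V_{\old}} \colon V_{\old} \to V_{\old}$ is bijective. Indeed, any $f \in \Ker(M)$ satisfies $f = N(f) \in V_{\old}$, so $\Ker(M) \subseteq V_{\old}$; and for any $g \in V$, writing a putative preimage as $f = g + h$ with $h \in V_{\old}$ reduces $M(f) = g$ to the equation $M|_{V_{\old}}(h) = N(g)$ in $V_{\old}$, which is uniquely solvable iff $M|_{V_{\old}}$ is bijective. This is a soft argument that uses only observations (i) and (ii), with no appeal to the direct sum.

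For the direction ``direct sum $\Rightarrow$ $M$ bijective'', assume $V = V_{\old} \oplus V_{\new}$. In this splitting, (i) and (ii) force $T'$ to have block form $\bsmat{A}{B}{0}{0}$ with $A = T'|_{V_{\old}} \in \End(V_{\old})$, so $M$ becomes block upper triangular with diagonal blocks $\Id - P^{k-2l}A^2$ and $\Id$; hence it suffices to show that $\Id - P^{k-2l}A^2$ is bijective on $V_{\old}$. By the reduction lemma this reduces to proving that no nonzero $f \in V_{\old}$ satisfies $(T')^2(f) = P^{2l-k} f$. Using the explicit formulas $T(f) = f + P^{-l}U_\p(f|W_\p^{(\p\m)})$ and $T'(f) = f|W_\p^{(\p\m)} + P^{l-k}U_\p(f)$ obtained from Proposition~\ref{relation between Trace Atkin_Lehner and U_p operators}, together with the involution identity $(f|W_\p^{(\p\m)})|W_\p^{(\p\m)} = P^{2l-k} f$ of Lemma~\ref{Atkin-Lehner operator is an involution}, I would unpack the eigenvalue equation $T'(T'(f)) = P^{2l-k} f$ to derive relations forcing $T(f) = 0 = T'(f)$, so that $f \in V_{\old} \cap V_{\new} = \{0\}$.

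For the converse, assume $M$ is bijective on $V$. Given $f \in V$, let $f_o \in V_{\old}$ be the unique solution of $M|_{V_{\old}}(f_o) = N(f)$ guaranteed by the reduction lemma, and set $f_n := f - f_o$; the formulas in Proposition~\ref{relation between Trace Atkin_Lehner and U_p operators} then yield $T(f_n) = T'(f_n) = 0$, so $f_n \in V_{\new}$ and $V = V_{\old} + V_{\new}$. For the trivial intersection, if $f \in V_{\old} \cap V_{\new}$ then $N(f) = 0$ and hence $M(f) = f$, while the vanishing $T(f) = 0 = T'(f)$ together with injectivity of $M$ forces $f = 0$. The main obstacle is the explicit computation in the forward direction translating the eigenvalue condition $(T')^2(f) = P^{2l-k} f$ into vanishing of both trace maps on $f$; this step is precisely where the scalar $P^{k-2l}$ in the statement arises, as it is forced by the involution scalar of Lemma~\ref{Atkin-Lehner operator is an involution}.
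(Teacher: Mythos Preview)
The paper does not give its own proof of this statement; it is quoted from \cite[Theorem~3.15]{Val22}. So there is no in-paper argument to compare against, but your outline has genuine gaps in both directions that are worth flagging.

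\textbf{Forward direction.} You claim that for $f\in V_{\old}$, the equation $(T')^2 f = P^{2l-k} f$ can be unpacked to force $T(f)=T'(f)=0$. This is false. If $\varphi\in S_{k,l}(\Gamma_0(\m))$ is a $T_\p$-eigenform with eigenvalue $P^{k/2}$, then by~\eqref{Trace and delta P} one has $T'(\delta_1\varphi)=P^{l-k}T_\p\varphi=P^{l-k/2}\varphi$, so $(T')^2(\delta_1\varphi)=P^{2l-2k}T_\p^2\varphi=P^{2l-k}\varphi$ and hence $\delta_1\varphi\in\Ker(M)$; yet $T'(\delta_1\varphi)\neq 0$. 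The element $\delta_1\varphi$ is \emph{not} in $V_{\new}$. What actually happens is that the existence of such $\varphi$ lets you construct a \emph{different} nonzero element $\delta_P\varphi - P^{l-k/2}\delta_1\varphi\in V_{\old}\cap V_{\new}$, and that is what contradicts the direct sum. Your strategy of showing $\Ker(M)\subseteq V_{\new}$ cannot succeed.

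\textbf{Converse direction.} Your argument for $V_{\old}\cap V_{\new}=\{0\}$ reads: $f\in V_{\old}\cap V_{\new}$ gives $N(f)=0$, hence $M(f)=f$, and ``injectivity of $M$ forces $f=0$''. But injectivity means $M(g)=0\Rightarrow g=0$, not $M(g)=g\Rightarrow g=0$. Likewise, your choice of $f_o$ as the solution of $M(f_o)=N(f)$ does not yield $T'(f_n)=0$ or $T(f_n)=0$; a direct check shows $T'(f_o)=T'(f)$ fails in general. In both places the correct route is to write elements of $V_{\old}$ as $\delta_1\varphi+\delta_P\psi$, compute $T$ and $T'$ on this basis via~\eqref{delta_P and A-L together}--\eqref{Trace and delta P} (one finds $T(\delta_1\varphi)=\varphi$, $T(\delta_P\psi)=P^{l-k}T_\p\psi$, $T'(\delta_1\varphi)=P^{l-k}T_\p\varphi$, $T'(\delta_P\psi)=P^{2l-k}\psi$), and reduce everything to invertibility of $\Id-P^{-k}T_\p^2$ on $S_{k,l}(\Gamma_0(\m))$. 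This is in effect what the paper extracts in Proposition~\ref{Direct sum pm case}.
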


We  now rephrase  Theorem~\ref{direct sum decomposition as in Valentino} in terms of the eigenvalues of the $T_{\p}$-operator.
\begin{prop}
\label{Direct sum pm case}
The $T_\p$-operator has no eigenform on $\Spa{\m}$ with eigenvalues $\pm P^{\frac{k}{2}}$ if and only if Question~\ref{example of direct sum at level P}(2) holds.
\end{prop}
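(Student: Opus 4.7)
My plan is to make $(\TroA{\p\m}{\m})^2$ explicit in terms of $T_\p$, show that the kernel of $\Id - P^{k-2l}(\TroA{\p\m}{\m})^2$ actually sits inside $\Spa{\m}$, and there identify it with the kernel of $T_\p^2 - P^k\Id$; the proposition then follows from Theorem~\ref{direct sum decomposition as in Valentino}.

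The key calculation is the identity $\TroA{\p\m}{\m}(g) = P^{l-k}T_\p(g)$ for every $g \in \Spa{\m}$, viewed inside $\Spa{\p\m}$. To prove it I use the factorization
\[
\psmat{P}{b}{Pm}{Pd} = \psmat{1}{b}{m}{Pd}\psmat{P}{0}{0}{1},
\]
valid because $Pd - mb = 1$, whose first factor lies in $\Gamma_0(\m)$; hence $g|_{k,l}W_\p^{(\p\m)} = \delta_P(g)$. Plugging $\delta_P(g)$ into Proposition~\ref{relation between Trace, A-L and U_P operator for level pm} (the $\alpha = 1$ case) and invoking Lemma~\ref{Atkin-Lehner operator is an involution} to rewrite $\delta_P(g)|_{k,l}W_\p^{(\p\m)} = P^{2l-k}g$, I obtain $\TroA{\p\m}{\m}(g) = \delta_P(g) + P^{l-k}U_\p(g)$. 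The definition of $T_\p$ at level $\m$ from Definition~\ref{Definitions of T_p and U_p operators} gives $U_\p(g) = T_\p(g) - P^{k-l}\delta_P(g)$; the two $\delta_P$ terms cancel and the identity drops out.

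Because the image of $\TroA{\p\m}{\m}$ lies in $\Spa{\m}$, so does the image of $(\TroA{\p\m}{\m})^2$. Any $f$ in the kernel of $\Id - P^{k-2l}(\TroA{\p\m}{\m})^2$ therefore equals $P^{k-2l}(\TroA{\p\m}{\m})^2 f$ and must itself belong to $\Spa{\m}$. Applying the key identity twice to such an $f$ (note $T_\p$ preserves $\Spa{\m}$) yields $(\TroA{\p\m}{\m})^2 f = P^{2(l-k)}T_\p^2 f$, so the kernel equation collapses to $T_\p^2 f = P^k f$. In other words,
\[
\ker\bigl(\Id - P^{k-2l}(\TroA{\p\m}{\m})^2\bigr) = \ker\bigl(T_\p^2 - P^k \Id\bigr)\big|_{\Spa{\m}}.
\]
Writing $T_\p^2 - P^k\Id = (T_\p - P^{k/2}\Id)(T_\p + P^{k/2}\Id)$ with commuting factors, the right-hand kernel vanishes iff neither $P^{k/2}$ nor $-P^{k/2}$ is an eigenvalue of $T_\p$ on $\Spa{\m}$. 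Since $\Spa{\p\m}$ is finite-dimensional, injectivity is equivalent to bijectivity, and Theorem~\ref{direct sum decomposition as in Valentino} then translates this into Question~\ref{example of direct sum at level P}(2).

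I expect the main technical obstacle to be the key identity itself: the scalars introduced by the Atkin-Lehner involution, the trace formula of Proposition~\ref{relation between Trace, A-L and U_P operator for level pm}, and the level-$\m$ identification $U_\p = T_\p - P^{k-l}\delta_P$ have to be tracked with care. Once the identity $\TroA{\p\m}{\m}|_{\Spa{\m}} = P^{l-k}T_\p$ is secured, the remainder of the argument is purely formal.
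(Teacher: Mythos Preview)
Your proof is correct and follows essentially the same route as the paper: both establish the identity $\TroA{\p\m}{\m}|_{\Spa{\m}} = P^{l-k}T_\p$ (this is the paper's equation~\eqref{Trace and delta P}), reduce $\Id - P^{k-2l}(\TroA{\p\m}{\m})^2$ to $\Id - P^{-k}T_\p^2$ on $\Spa{\m}$, and then invoke Theorem~\ref{direct sum decomposition as in Valentino}. Your justification that $\ker(\Id - P^{k-2l}(\TroA{\p\m}{\m})^2)\subseteq \Spa{\m}$ via the observation that the image of $\TroA{\p\m}{\m}$ already lies in $\Spa{\m}$ is in fact a bit more self-contained than the paper's, which appeals to the proof of Theorem~\ref{direct sum decomposition as in Valentino} for the same conclusion.
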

The proof of Proposition~\ref{Direct sum pm case} depends on the following observations. For any $\varphi\in S_{k,l}(\Gamma_0(\m))$, we have:
\begin{equation}
\label{delta_P and A-L together}
\begin{aligned}
\varphi|_{k,l} W_\p^{(\p\m)} &= \varphi|_{k,l}\psmat{1}{b}{m}{dP}\psmat{P}{0}{0}{1} = \varphi|_{k,l}\psmat{P}{0}{0}{1}= \delta_P\varphi, \\
(\delta_P\varphi)|_{k,l} W_\p^{(\p\m)} &= \varphi|_{k,l}\psmat{P}{0}{0}{1}\psmat{P}{b}{Pm}{dP}= P^{2l-k}\varphi.
\end{aligned}
\end{equation}
Combining Proposition~\ref{relation between Trace Atkin_Lehner and U_p operators} with~\eqref{delta_P and A-L together} we obtain
\begin{equation}
\label{Trace and delta P}
\Tr^{\prime \p\m}_\m (\delta_1(\varphi))=\varphi |_{k,l}W_\p^{(\p\m)}+P^{l-k}(U_\p(\delta_1(\varphi)))=\delta_P \varphi+P^{l-k}(U_\p(\delta_1(\varphi)))= P^{l-k}T_\p\varphi,
\end{equation}
where $W_\p^{(\p\m)}:=\psmat{P}{b}{Pm}{d P}$ for some $b,d\in A$  with $dP^2-bPm=P.$

\begin{proof}[Proof of Proposition~\ref{Direct sum pm case}]
The proof of Theorem~\ref{direct sum decomposition as in Valentino} implies that if $f\in \ker(\Id - P^{k-2l}(\TroA{\p\m}{\m})^2)$, then $f\in \mrm{Im}(\delta_1)$. Thus $\ker(\Id - P^{k-2l}(\TroA{\p\m}{\m})^2) \subseteq S_{k,l}(\Gamma_0(\m)).$
 Therefore, $\Id - P^{k-2l}(\TroA{\p\m}{\m})^2$ is bijective on $S_{k,l}(\Gamma_0(\p\m))$ if and only if it is bijective on $S_{k,l}(\Gamma_0(\m))$.

For any $f\in S_{k,l}(\Gamma_0(\m))$,~\eqref{Trace and delta P} implies
$\TroA{\p\m}{\m}(\TroA{\p\m}{\m}(f))
= P^{l-k}\TroA{\p\m}{\m}(T_\p(f))= P^{2l-2k}T_\p(T_\p(f))$.
Thus, on $S_{k,l}(\Gamma_0(\m))$, we have $\Id - P^{k-2l}(\TroA{\p\m}{\m})^2 = \Id - P^{-k}T_\p^2$. 
Observe that the map $\Id - P^{-k}T_\p^2$ is bijective on $S_{k,l}(\Gamma_0(\m))$ if and only if the $T_\p$-operator has no eigenform on $\Spa{\m}$ with eigenvalues $\pm P^{\frac{k}{2}}$.
Now the proposition follows from Theorem~\ref{direct sum decomposition as in Valentino}.
\end{proof}

We now  prove a part of the Conjecture~\ref{level T conjecture} for $S_{k,l}(\Gamma_0(T))$ when  $\dim\ S_{k,l}(\GL_2(A))\leq 2$. 
We first prove that, Conjecture~\ref{level T conjecture} is true for $S_{k,l}(\Gamma_0(T))$ when  $\dim\ S_{k,l}(\GL_2(A))\leq 1$.  
\begin{thm}
\label{Conjecture of BV for dimension 1}
For $\m =A, \deg P =1$, Question~\ref{example of direct sum at level P} 
is true for $S_{k,l}(\Gamma_0(P))$ when $\dim S_{k,l}(\GL_2(A))\leq 1$.
In particular, Conjecture~\ref{level T conjecture} is true for 
$S_{k,l}(\Gamma_0(T))$ when $\dim S_{k,l}(\GL_2(A))\leq 1$.
\end{thm}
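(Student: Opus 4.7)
The plan is to split on $d := \dim S_{k,l}(\GL_2(A))\in\{0,1\}$, to reformulate part (2) of Question~\ref{example of direct sum at level P} via Proposition~\ref{Direct sum pm case}, and to deduce part (3) from parts (1), (2) by means of the discussion in \S\ref{Discussion on (2)}. In this way the heart of the argument is concentrated in a single Fourier-coefficient analysis for the case $d=1$.

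If $d=0$, everything is essentially formal: the source of $(\delta_1,\delta_P)$ being zero gives $S_{k,l}^{\p-\old}(\Gamma_0(P))=0$; the trace maps $\Tro{P}{1}$ and $\TroA{P}{1}$ take values in $S_{k,l}(\GL_2(A))=0$, so $S_{k,l}^{\p-\new}(\Gamma_0(P))$ coincides with $S_{k,l}(\Gamma_0(P))$. Parts (1) and (2) are then immediate, and part (3) follows from \cite[Remark 2.17]{BV20} applied to the full cusp form space, which now equals the newspace.

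If $d=1$, fix a generator $f$ of $S_{k,l}(\GL_2(A))$. Since $T_\p$ preserves this one-dimensional space, necessarily $T_\p f = \lambda f$ for some $\lambda\in C$, and by Proposition~\ref{Direct sum pm case} it suffices for (1), (2) to rule out $\lambda\in\{0,\pm P^{k/2}\}$. Writing $P=T-a$ with $a\in\F_q$ and $f=\sum_{i\equiv l\,(q-1)} a_f(i)u^i$, I would use Definition~\ref{Definitions of T_p and U_p operators} and a direct slash-operator computation to obtain $T_\p f = P^k f(Pz) + \sum_{Q\in\F_q} f((z+Q)/P)$, then expand each summand in the uniformizer $u$ via the Carlitz identity $e_L(Pz) = P\,e_L(z) + e_L(z)^q$ (and its shift under $z\mapsto z+Q$), which converts $u(Pz)$ and $u((z+Q)/P)$ into rational functions in $u$. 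Matching coefficients in the identity $T_\p f = \lambda f$ at the lowest nonzero indices of $f$ should yield algebraic relations on $(a_f(i))$ incompatible with any of $\lambda\in\{0,\pm P^{k/2}\}$, giving parts (1) and (2).

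Once (1) and (2) are in place, part (3) follows from \S\ref{Discussion on (2)}: diagonalizability of $U_\p$ on the oldspace is equivalent to diagonalizability plus injectivity of $T_\p$ on $S_{k,l}(\GL_2(A))$, both automatic here (the former because $T_\p$ acts as a $1\times 1$ matrix, the latter by (1)); diagonalizability on the newspace is \cite[Remark 2.17]{BV20}; and the direct sum from (2) assembles the two pieces. The main obstacle will be the concrete $u$-expansion computation in case $d=1$, specifically ruling out both signs $\pm P^{k/2}$ simultaneously: the natural quantity controlled by the slash-operator expansion is $\lambda^2$, so breaking the sign symmetry will likely require exploiting the congruence $k\equiv 2l\pmod{q-1}$ together with the specific rational-function shape of $u(Pz)$ in $u$, and will possibly need the leading $u$-coefficient of $f$ to be normalized using the $(q-1)$-periodicity of $a_f(i)$.
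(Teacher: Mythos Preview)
Your high-level reduction matches the paper's: Proposition~\ref{Direct sum pm case} turns part (2) into the statement that $T_\p$ has no eigenvalue $\pm P^{k/2}$ on $S_{k,l}(\GL_2(A))$, and \S\ref{Discussion on (2)} reduces part (3) to (1), (2) plus diagonalizability of $T_\p$, which is automatic in dimension $\le 1$. The $d=0$ case is trivial either way.

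The gap is in your $d=1$ step. You propose to expand $u(Pz)$ and $u((z+Q)/P)$ via the Carlitz identity and then say that ``matching coefficients \dots\ should yield algebraic relations incompatible with $\lambda\in\{0,\pm P^{k/2}\}$,'' but you never specify the relations or the incompatibility. The paper makes this concrete: it names the generator explicitly as $g_1^x h^l$ (for $l\neq 0$, with $k=x(q-1)+l(q+1)$ and $0\le x\le q$) or $g_1^x\Delta$ (for $l=0$), writes out enough of its $u$-expansion to know the first few coefficients as elements of $A$, and then applies Gekeler's closed formula \cite[Example~7.4]{Gek88} expressing the $l$-th (resp.\ $(q-1)$-th) coefficient of $T_\p f$ as the finite sum $\sum_{j} \binom{l-1}{j}P^{l-j}a_f\bigl(j(q-1)+l\bigr)$. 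A single degree count then gives $0<\deg a_{T_\p f}(l)<k/2$, which simultaneously rules out $\lambda=0$ (the coefficient is nonzero) and $\lambda=\pm P^{k/2}$ (since $a_f(l)\neq 0$, the equality $a_{T_\p f}(l)=\pm P^{k/2}a_f(l)$ would force degree $\ge k/2$). Your concern about ``breaking the sign symmetry'' and controlling $\lambda^2$ is therefore a red herring: no such difficulty arises once one looks at the right coefficient with the right degree estimate. The Carlitz-identity expansion you sketch is precisely how Gekeler derived Example~7.4; quote that formula and then supply the degree bound, which is where the actual content lies.
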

Theorem~\ref{Conjecture of BV for dimension 1} can be thought of as a continuation of the work done by Bandini and Valentino in~\cite{BV19a},~\cite{BV20}, and ~\cite{BV22}. Their method mainly involves  harmonic cocycles, the trace maps $\mrm{Tr}$ and $\mrm{Tr}^\prime$, and
the linear algebra interpretation of the Hecke operators $T_\p$ 
and $U_\p$-operators. However, we prove Theorem~\ref{Conjecture of BV for dimension 1} based on the analysis of the Fourier coefficients of the image of an element via the $T_\p$-operator, which may be suitable for generalizations. Finally, recall that $\dim M_{k,l}(\GL_2(A)) = \Big[\frac{k-l(q+1)}{q^2-1}\Big] +1$ (cf. \cite[Proposition 4.3]{Cor97}). 
By~\cite[Theorem 5.13]{Gek88}, the graded algebra $\oplus_{k,l}M_{k,l}(\GL_2(A))$ is generated by
$g_1,h$ .

\begin{proof}[Proof of Theorem~\ref{Conjecture of BV for dimension 1}]
Suppose $\dim S_{k,l}(\GL_2(A))=0$. Then Question~\ref{example of direct sum at level P}(1) is trivially true. Question~\ref{example of direct sum at level P}(2) and (3) are true by Proposition~\ref{Direct sum pm case}, by the diagonalizability of the $T_\p$-operator on $S_{k,l}(\GL_2(A))$.

Now, suppose  $\dim S_{k,l}(\GL_2(A))=1$. Clearly, the $T_\p$-operator is diagonalizable on $S_{k,l}(\GL_2(A)).$ Therefore, combining Proposition~\ref{Direct sum pm case} with the discussions in \S\ref{Discussion on (2)}, Question~\ref{example of direct sum at level P} 
for the level $(P) $ is true  if we show $\ker(T_\p)=0$ and the $T_\p$-operator has no eigenform on $S_{k,l}(\GL_2(A))$ with eigenvalues $\pm P^{\frac{k}{2}}$. We prove this statement in two cases, i.e.,  for  $l \neq 0$ and $l = 0$.

We first consider the case $l \neq 0$. 
In this case, $S_{k,l}(\GL_2(A))=\langle g_1^x h^l\rangle$ for some $x\in \{0, \ldots, q\}$ such that $k=x(q-1)+l(q+1)$.
 The $u$-series expansions of $g_1,  h$ are given by
$$g_1=1- (T^q-T)u^{q-1}- (T^q-T) u^{(q-1)(q^2-q+1)}+\cdots\in A[[u]],$$
$$h=-u-u^{1+(q-1)^2}+(T^q-T)u^{1+q(q-1)}-u^{1+(2q-2)(q-1)}+\cdots \in A[[u]].$$
Therefore, 
$g_1^xh^l= (-1)^l\sum_{i=0}^x (-1)^i\bstwomat{x}{i}(T^q-T)^iu^{i(q-1)+l} + O(u^{(q-1)^2+l})\in A[[u]].$
Let $T_\p(g_1^xh^l)=\sum_{j=0}^\infty a_{T_\p(g_1^xh^l)}(j(q-1)+l)u^{j(q-1)+l}$. 
By~\cite[Example 7.4]{Gek88}, we have
$$ a_{T_\p(g_1^xh^l)}(l)= \sum_{0\leq j < l} \bstwomat{l-1}{j}P^{l-j} a_{g_1^xh^l}(j(q-1)+l)\in A.$$
We define $x_0:=\min\{x,l-1\}$. Then,
$ a_{T_\p(g_1^xh^l)}(l)= \sum_{0\leq j \leq x_0} \bstwomat{l-1}{j} P^{l-j} (-1)^{l+j} \bstwomat{x}{j}(T^q-T)^j.$
Clearly, the set $\{0\leq j \leq x_0\mid  \bstwomat{l-1}{j}\bstwomat{x}{j} \neq 0\}$ is non-empty
and let $j_\mrm{max}$ be its maximum. 
Since $\deg (P^{l-j}(T^q-T)^j)<\deg (P^{l-(j+1)}(T^q-T)^{j+1})$, we get 
\begin{equation}
\label{degree bound for the lth coefficient of T_p operator on dimension 1}
0<\deg (a_{T_\p(g_1^xh^l)}(l))=  l+j_\mrm{max} (q-1)\leq l+x_0 (q-1) < \frac{x(q-1)+l(q+1)}{2}.
\end{equation}
The first inequality in~\eqref{degree bound for the lth coefficient of T_p operator on dimension 1} shows that 
$\ker(T_\p)=0.$
The inequality $\deg (a_{T_\p(g_1^xh^l)}(l))<\frac{x(q-1)+l(q+1)}{2}$
in~\eqref{degree bound for the lth coefficient of T_p operator on dimension 1}
shows that $T_\p(g_1^xh^l)$ cannot be equal to $\pm P^{\frac{x(q-1)+l(q+1)}{2}}g_1^xh^l$. 
In particular, the $T_\p$-operator has no eigenform on $S_{k,l}(\GL_2(A))$ 
with eigenvalues $\pm P^{\frac{k}{2}}$.

 

We now consider the case of $l=0$.
In this case, $S_{k,0}(\GL_2(A)) =\langle g_1^x\Delta \rangle$ for some $x\in \{0,\ldots, q\}$ such that $k=x(q-1)+(q^2-1)$. Recall that
$\Delta= -u^{q-1}+u^{q(q-1)}-(T^q-T)u^{(q+1)(q-1)} +O(u^{(q^2-q+1)(q-1)})\in A[[u]].$
Hence
$$g_1^x\Delta= \sum_{i=0}^x \bstwomat{x}{i}(-1)^{i+1}(T^q-T)^iu^{(i+1)(q-1)} + O(u^{q(q-1)})\in A[[u]].$$
 In this case, we consider the coefficient $a_{T_\p(g_1^x\Delta)}(q-1)$
to prove our claims. Since $a_{g_1^x\Delta}(0)=0$, we have
$a_{T_\p(g_1^x\Delta)}(q-1)= \sum_{0\leq j < q-1} \bstwomat{q-2}{j} P^{q-1-j} a_{g_1^x\Delta}((j+1)(q-1))$
(cf. \cite[Example 7.4]{Gek88}). We define $y_0 :=\min\{x, q-2\}$. Then, 
$$a_{T_\p(g_1^x\Delta)}(q-1)=\sum_{0\leq j \leq y_0} \bstwomat{q-2}{j}P^{q-1-j} (-1)^{j+1}  \bstwomat{x}{j}  (T^q-T)^j.$$
Arguing as in the previous case, i.e., $l \neq 0$, we get
$0<\deg (a_{T_\p(g_1^x\Delta)}(q-1))< \frac{x(q-1)+(q^2-1)}{2},$
which shows that  $\ker(T_\p)=0$ and the $T_\p$-operator has no eigenform on $S_{k,l}(\GL_2(A))$ with eigenvalues $\pm P^{\frac{k}{2}}$. This completes the proof of the proposition.
\end{proof}

We now show that, a part of the Conjecture~\ref{level T conjecture} is true for $S_{k,l}(\Gamma_0(T))$ when  $\dim\ S_{k,l}(\GL_2(A)) = 2$.  More precisely,
\begin{thm}
\label{Direct sum decomposition dim 2 case}
Let $\m=A$ and $\deg P=1$.
If $\dim S_{k,l}(\GL_2(A)) = 2$, then the direct sum decomposition in Question~\ref{example of direct sum at level P}(2) is true for $S_{k,l}(\Gamma_0(\p))$.
\end{thm}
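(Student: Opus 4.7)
The plan is to invoke Proposition~\ref{Direct sum pm case}, which reduces the direct sum decomposition in Question~\ref{example of direct sum at level P}(2) to showing that $T_\p$ has no eigenform on $S_{k,l}(\GL_2(A))$ with eigenvalue $\pm P^{k/2}$. Since $\dim S_{k,l}(\GL_2(A)) = 2$, the action of $T_\p$ on this space is encoded by a $2\times 2$ matrix $(A_{ij})$ over $A$, and the condition to verify becomes the single polynomial non-identity
\[
\bigl(P^k + \det T_\p\bigr)^2 \neq P^k\,\bigl(\operatorname{tr} T_\p\bigr)^2.
\]

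First, I would fix an explicit basis $\{f_1,f_2\}$ of $S_{k,l}(\GL_2(A))$. Since the graded ring $\bigoplus_{k,l} M_{k,l}(\GL_2(A))$ is generated by $g_1$ and $h$, in the case $l \neq 0$ the hypothesis $\dim = 2$ forces $k = a_1(q-1) + l(q+1)$ with $a_1 \geq q+1$, so a natural basis is $\{g_1^{a_1} h^l,\; g_1^{a_1-(q+1)} h^{l+q-1}\}$. In the case $l=0$, an analogous basis involves $\Delta$. In either case, $f_1$ and $f_2$ begin in $u$-expansion at distinct orders (for instance $u^l$ and $u^{l+q-1}$), so the four entries $A_{ij}$ can be recovered from just the two lowest relevant Fourier coefficients of $T_\p(f_1)$ and $T_\p(f_2)$ via a triangular system.

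Next, exactly as in the proof of Theorem~\ref{Conjecture of BV for dimension 1}, I would apply the Gekeler formula \cite[Example 7.4]{Gek88} to compute these Fourier coefficients, and hence each $A_{ij}$, as an explicit element of $A$, carefully tracking its $T$-degree. Substituting these expressions into the eigenvalue condition above reduces the theorem to a pure degree comparison, in the spirit of the inequality~\eqref{degree bound for the lth coefficient of T_p operator on dimension 1} from the one-dimensional argument. The key point will be that the $T$-degrees of $\operatorname{tr} T_\p$ and $\det T_\p$ (read off from the $A_{ij}$) are strictly smaller than what the displayed identity would require, so that the leading terms in $T$ of the two sides cannot agree.

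The main obstacle, I expect, will be controlling the off-diagonal entries $A_{12}$ and $A_{21}$: in the one-dimensional argument only the very lowest Fourier coefficient was needed, whereas here a second coefficient enters, and its degree bookkeeping is appreciably more delicate, especially in the $l=0$ case where $\Delta$ must be tracked through the Gekeler formula and the parameter $s$ controlling how $\dim=2$ arises must be dispatched uniformly. I am optimistic, however, that the sharpest bound will still come from a single off-diagonal term, forcing the two sides of the displayed identity apart and yielding the desired decomposition.
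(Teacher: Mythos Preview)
Your plan is correct and essentially mirrors the paper's proof: both reduce via Proposition~\ref{Direct sum pm case} to excluding $\pm P^{k/2}$ as $T_\p$-eigenvalues, take the same basis (your $g_1^{a_1-(q+1)}h^{l+q-1}$ is a scalar multiple of the paper's $g_1^{x}\Delta h^l$, since $h^{q-1}$ and $\Delta$ span the same one-dimensional space), and use Gekeler's formula \cite[Example~7.4]{Gek88} together with degree bounds on the $l$-th and $(l+q-1)$-th Fourier coefficients of the $T_\p f_i$.

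The only difference is packaging. You encode the eigenvalue exclusion as the polynomial non-identity $(P^k+\det T_\p)^2\neq P^k(\operatorname{tr}T_\p)^2$; the paper instead argues directly with a putative eigenvector $g_1^{y} h^l + c\, g_1^{x}\Delta h^l$, first ruling out $g_1^{x}\Delta h^l$ itself by a norm bound on $a_{T_\p(g_1^{x}\Delta h^l)}(l+q-1)$, then determining $|c|$ from the $l$-th coefficient, and finally deriving a contradiction at the $(l+q-1)$-th coefficient. The paper's route avoids having to solve the triangular system extracting $A_{21},A_{22}$ and works with the raw Fourier coefficients and the non-Archimedean norm throughout, but the underlying estimates are exactly the ones you would need for your trace and determinant bounds.

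One caveat you should anticipate: the key inequality driving the argument, essentially $(x+1)(q-1)+l<k/2$ with $x=a_1-(q+1)$, degenerates to an equality when $(a_1,l)=(2q+1,1)$, and the paper disposes of this borderline case separately via three explicitly computed coefficients. Your trace/determinant comparison will likewise fail to be decided by leading degrees alone there. The $l=0$ case is handled in the paper only by analogy, with the basis $\{g_1^{y}\Delta,\, g_1^{x}\Delta^2\}$ and the coefficients of index $q-1$ and $2(q-1)$ replacing those of index $l$ and $l+q-1$.
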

\begin{proof}
By Proposition~\ref{Direct sum pm case}, 
it is enough to show $T_\p$-operator has no eigenform on $S_{k,l}(\GL_2(A))$ with eigenvalues $\pm P^{k/2}$. We give a complete proof only for $l\neq0$ and the proof is similar when $l=0$. So, we assume 
$l \neq 0$.

Since $\dim S_{k,l}(\GL_2(A))=2$,  $S_{k,l}(\GL_2(A))=\langle g_1^yh^l, g_1^{x}\Delta h^l\rangle$ for some $y\in \{q+1, \ldots, 2q+1\}$ such that $k=y(q-1)+l(q+1)$ and where $x := y-(q+1)$.  There are two cases to be considered.
We first assume that $(y,l)\ne (2q+1,1)$:
Recall the following $u$-expansions
\begin{equation*}
g_1^y = 
\begin{cases}
\dis \sum_{i=0}^{y} \bstwomat{y}{i}(-1)^{i}(T^q-T)^iu^{i(q-1)} + O(u^{(l+q)(q-1)}) \qquad \qquad  \mathrm{if} \ y<l+(q-1),\\
\dis \sum_{i=0}^{l+(q-1)} \bstwomat{y}{i}(-1)^{i}(T^q-T)^iu^{i(q-1)} + O(u^{(l+q)(q-1)}) \ \ \qquad \mathrm{if} \ y\geq l+(q-1).
\end{cases}
\end{equation*}

\begin{equation}
\label{u-series expansion of g_1^x in dimension 2 case}
g_1^x= \sum_{i=0}^x \bstwomat{x}{i}(-1)^i(T^q-T)^iu^{i(q-1)} + O(u^{(q-1)(q^2-q+1)}).
\end{equation}

\begin{equation}
\label{u-series expansion of h^l}
h^l= (-1)^lu^l + (-1)^l l u^{(q-1)^2+l} +(-1)^{l-1}l (T^q-T) u^{q(q-1)+l} + O(u^{(l+q)(q-1)+l}).
\end{equation}

\begin{equation*}
\Delta= -u^{q-1}+u^{q(q-1)} -(T^q-T)u^{(q+1)(q-1)} +O(u^{(q^2-q+1)(q-1)}).
\end{equation*}

\begin{equation}
\label{u-series expansion of Delta.h^l}
\small \Delta h^l = (-1)^{l+1}u^{q-1+l} +(-1)^l (1-l)u^{q(q-1)+l}+ (-1)^l (l-1) (T^q-T)u^{(q^2-1)+l} + O(u^{(l+q)(q-1)+l})
\end{equation}
Finally, we have the required $u$-expansions of  $g_1^{x}\Delta h^l$
and $g_1^yh^l$ as
\begin{equation*}
 g_1^{x}\Delta h^l=
\begin{cases}
\dis (-1)^{l+1}\sum_{i=1}^{x+1}\bstwomat{x}{i-1}(-1)^{i-1}(T^q-T)^{i-1}u^{i(q-1)+l} + O(u^{l(q-1)+l}) \quad \mathrm{if} \ x+1<l, \\
\dis (-1)^{l+1}\sum_{i=1}^{l-1}\bstwomat{x}{i-1}(-1)^{i-1}(T^q-T)^{i-1}u^{i(q-1)+l} + O(u^{l(q-1)+l}) \quad \mathrm{if} \ x+1\geq l.
\end{cases}
\end{equation*}
\begin{equation*}
g_1^yh^l = (-1)^l\sum_{i=0}^{l-1} \bstwomat{y}{i}(-1)^{i}(T^q-T)^iu^{i(q-1)+l} + O(u^{l(q-1)+l}).
\end{equation*}
We first  show that $T_\p(g_1^x\Delta h^l)\ne \pm P^{\frac{x(q-1)+(q^2-1)+l(q+1)}{2}}g_1^x\Delta h^l.$ The $(l+(q-1))$-th coefficient of $T_\p(g_1^x\Delta h^l)$ is given by (cf. \cite[Example 7.4]{Gek88})
\begin{equation}
\label{l+q-1 th coefficient of g1x Delta h Hecke action}
a_{T_\p(g_1^x\Delta h^l)}(l+(q-1))= \sum_{0\leq i < l+q-1} \bstwomat{l+q-2}{i} P^{l+q-1-i} a_{g_1^x\Delta h^l}(l+(i+1)(q-1)).
\end{equation}
For $f \in A, g\in A \setminus \{0\}$, $|\frac{f}{g}| := q^{\deg(f)-\deg(g)}$. Now, take
the norm of $a_{T_\p(g_1^x\Delta h^l)}(l+(q-1))$ to get 
\begin{align*}
 |a_{T_\p(g_1^x\Delta h^l)}(l+(q-1))| &\leq \max_{\substack{1\leq i \leq l+q-1}}\{|P^{l+q-i} a_{g_1^x\Delta h^l}(l+i(q-1))|\}\\
&= \max_{\substack{1\leq i \leq l+q-1}}\{|P^{l+q-i}\sum_{\substack{\alpha\in \N \cup \{0\}, \beta \in \N\\ \alpha + \beta =i}} a_{g_1^x}(\alpha(q-1))\cdot a_{\Delta h^l}(\beta(q-1)+l)|\}
\end{align*}
By~\eqref{u-series expansion of g_1^x in dimension 2 case} and~\eqref{u-series expansion of Delta.h^l}, we have 
$a_{g_1^x}(i(q-1))=0$ for $x<i\leq l+q-1$ and $a_{\Delta h^l}(\beta(q-1)+l)=0$ for $1\leq \beta \leq l+q-1$ with $\beta \notin \{1, q, q+1\}.$ Therefore, we get
\begin{align*}
|a_{T_\p(g_1^x\Delta h^l)}(l+(q-1))| &\leq \max_{\beta\in \{1, q, q+1\}} \Big\{ \max_{\substack{1\leq i \leq l+q-1, \\ 0\leq i-\beta \leq x}} \{|P^{l+q-i} a_{g_1^x}((i-\beta)(q-1)) a_{\Delta h^l}(\beta(q-1)+l)|\}\Big\}\\
&= \max \Big\{\max_{\substack{1\leq i \leq l+q-1\\ 0\leq i-1\leq x}}\{q^{i(q-1)+l}\}, \max_{\substack{1\leq i \leq l+q-1\\ 0\leq i-q\leq x}}\{q^{(i-q)(q-1)+l} \}, \max_{\substack{1\leq i \leq l+q-1\\ 0\leq i-q-1\leq x}}\{q^{(i-q)(q-1)+l}\} \Big\}\\
&= \max \Big\{q^{(x+1)(q-1)+l}, q^{x(q-1)+l}, q^{(x+1)(q-1)+l} \Big\}= q^{(x+1)(q-1)+l}.
\end{align*}
Hence, we have
\begin{equation}
 \label{bound for 2nd coefficient of T_p operator on (g1)^x Delta (h)^l}
|a_{T_\p(g_1^x\Delta h^l)}(l+(q-1))| \leq q^{(x+1)(q-1)+l}.
\end{equation}
On the other hand, the assumption $(y,l)\ne (2q+1,1)$ implies $(x+1)(q-1)+l< \frac{x(q-1)+(q^2-1)+l(q+1)}{2}.$
Since $a_{g_1^x\Delta h^l}(l+(q-1))= (-1)^{l+1}$, combining the last inequality with~\eqref{bound for 2nd coefficient of T_p operator on (g1)^x Delta (h)^l}, we get 
$T_\p(g_1^x\Delta h^l)\ne \pm P^{\frac{x(q-1)+(q^2-1)+l(q+1)}{2}}g_1^x\Delta h^l$.
Now, by the same technique, we give an upper bound on the coefficient $a_{T_\p(g_1^yh^l)}(l+(q-1)).$ Recall that,
\begin{equation}
a_{T_\p(g_1^yh^l)}(l+(q-1))= \sum_{0\leq i < l+(q-1)} \bstwomat{l+q-2}{i} P^{l+q-1-i} a_{g_1^yh^l}(l+(i+1)(q-1)).
\end{equation}
Now, take the norm of $a_{T_\p(g_1^yh^l)}(l+(q-1))$ to get 
\begin{align*}
 |a_{T_\p(g_1^yh^l)}(l+(q-1))| &\leq \max_{\substack{1\leq i \leq l+q-1}}\{|P^{l+q-i} a_{g_1^yh^l}(l+i(q-1))|\}\\
&= \max_{\substack{1\leq i \leq l+q-1}}\{|P^{l+q-i}\sum_{\substack{\alpha, \beta\in \N \cup \{0\}\\ \alpha + \beta =i}} a_{g_1^y}(\alpha(q-1))\cdot a_{h^l}(\beta(q-1)+l)|\}.
\end{align*}
By~\eqref{u-series expansion of h^l}, we get that $a_{h^l}(\beta(q-1)+l)=0$ for $0\leq \beta \leq l+q-1$ with $\beta\notin \{0, q-1, q\}.$
\begin{itemize}
 \item When $y<l+q-1$. In this case, $a_{g_1^y}(i(q-1))=0$ for $y< i\leq l+q-1.$ Hence,
\begin{align*}
|a_{T_\p(g_1^yh^l)}(l+(q-1))| &\leq \max_{\beta\in \{0, q-1, q\}} \Big\{ \max_{\substack{\beta\in \{0, q-1, q\}\\ 1\leq i\leq l+q-1 \\ 0\leq i-\beta \leq y}} \{|P^{l+q-i} a_{g_1^y}((i-\beta)(q-1)) a_{h^l}(\beta(q-1)+l)| \}\Big\}\\ 
 &= \max \Big\{\max_{\substack{1\leq i\leq l+q-1 \\ 0\leq i\leq y}}\{q^{i(q-1)+l+q}\}, \max_{\substack{1\leq i\leq l+q-1 \\ 0\leq i-q+1\leq y}}\{q^{(i-q)(q-1)+l+q}\},  \max_{\substack{1\leq i\leq l+q-1 \\ 0\leq i-q\leq y}}\{q^{(i-q)(q-1)+l+q}\}
 \Big\} \\
 &= \max \Big\{q^{y(q-1)+l+q}, q^{(y-1)(q-1)+l+q}, q^{y(q-1)+l+q} \Big\}= q^{y(q-1)+l+q}. 
\end{align*}
\item When $y\geq l+(q-1)$, a similar argument as above gives 
$|a_{T_\p(g_1^yh^l)}(l+q-1)| \leq q^{(l+q-1)(q-1)+l+q}.$
\end{itemize}
Finally, we get
\begin{equation}
\label{bound for 2nd coefficient of T_p operator on (g1)^y.h^l}
|a_{T_\p(g_1^yh^l)}(l+(q-1))| \leq \begin{cases}
q^{y(q-1)+l+q}         \quad \qquad \qquad \mathrm{if} \ y< l+(q-1),\\
 q^{(l+(q-1))(q-1)+l+q} \qquad  \mathrm{if} \ y\geq l+(q-1).
\end{cases}
\end{equation}
Since $T_\p(g_1^x\Delta h^l)\ne \pm P^{\frac{x(q-1)+(q^2-1)+l(q+1)}{2}}g_1^x\Delta h^l$, it is now enough to show that there does not exist any $c\in C$ such that 
\begin{equation}
\label{T_p on linear combination for contradiction}
T_\p(g_1^y h^l + c g_1^{x} \Delta h^l)= \pm P^{\frac{y(q-1)+l(q+1)}{2}}(g_1^y h^l + c g_1^{x}\Delta h^l)
\end{equation}
holds. On the contrary, suppose there is an element  $c\in C$ such that~\eqref{T_p on linear combination for contradiction} holds
with $``+"$ sign. A similar argument works with $``-"$ sign as well.
The $l$-th coefficients of $T_\p(g_1^yh^l)$ and $T_\p(g_1^x\Delta h^l)$ are given by (cf. \cite[Example 7.4]{Gek88})
\begin{equation*}
a_{T_\p(g_1^yh^l)}(l)
= (-1)^l \sum_{0\leq j < l} \bstwomat{l-1}{j}P^{l-j} \bstwomat{y}{j}(-1)^{j}(T^q-T)^j,
\end{equation*}
\begin{equation*}
a_{T_\p(g_1^x\Delta h^l)}(l)=
\begin{cases}
\dis (-1)^{l+1}\sum_{j=1}^{x+1}\bstwomat{l-1}{j}P^{l-j}\bstwomat{x}{j-1}(-1)^{j-1}(T^q-T)^{j-1} 
                               \quad \mrm{if} \ x+1< l,\\
\dis (-1)^{l+1}\sum_{j=1}^{l-1}\bstwomat{l-1}{j}P^{l-j}\bstwomat{x}{j-1}(-1)^{j-1}(T^q-T)^{j-1} 
                                \quad \mrm{if} \ x+1\geq l.
\end{cases}
\end{equation*}
Comparing the $l$-th coefficients on both sides of~\eqref{T_p on linear combination for contradiction}, we get
\begin{equation}
\label{comparing the l-th coefficients of T_p}
\small \sum_{0\leq j < l} \bstwomat{l-1}{j}P^{l-j} \bstwomat{y}{j}(-1)^{j}(T^q-T)^j - c\sum_{j=1}^{x_0}\bstwomat{l-1}{j}P^{l-j}\bstwomat{x}{j-1}(-1)^{j-1}(T^q-T)^{j-1}= P^{\frac{y(q-1)+l(q+1)}{2}},
\end{equation}
where $x_0:= \mrm{min}\{ x,l-2\}+1$.
If $c\sum_{j=1}^{x_0}\bstwomat{l-1}{j}P^{l-j}\bstwomat{x}{j-1}(-1)^{j-1}(T^q-T)^{j-1}=0$, then the inequality $lq<\frac{y(q-1)+l(q+1)}{2}$ would imply that both sides of~\eqref{comparing the l-th coefficients of T_p} have different degrees. So, the term  $c\sum_{j=1}^{x_0}\bstwomat{l-1}{j}P^{l-j}\bstwomat{x}{j-1}(-1)^{j-1}(T^q-T)^{j-1} \neq 0.$ 
Let
$j_{\max}:= \max \{1\leq j \leq x_0| \bstwomat{l-1}{j}\bstwomat{x}{j-1}\neq 0\}.$ Then,
$|\sum_{j=1}^{x_0}\bstwomat{l-1}{j}P^{l-j}\bstwomat{x}{j-1}(-1)^{j-1}(T^q-T)^{j-1}|=q^{j_{\max}(q-1)+l-q}.$ Since $lq<\frac{y(q-1)+l(q+1)}{2}$, it follows that 
$$|P^{\frac{y(q-1)+l(q+1)}{2}}- \sum_{0\leq j < l} \bstwomat{l-1}{j}P^{l-j} \bstwomat{y}{j}(-1)^{j}(T^q-T)^j|=q^\frac{y(q-1)+l(q+1)}{2}.$$
Therefore,~\eqref{comparing the l-th coefficients of T_p} gives us
\begin{equation}
\label{value of the constant}
c= -\frac{P^{\frac{y(q-1)+l(q+1)}{2}}- \sum_{0\leq j < l} \bstwomat{l-1}{j}P^{l-j} \bstwomat{y}{j}(-1)^{j}(T^q-T)^j}{\sum_{j=1}^{j_{\max}}\bstwomat{l-1}{j}P^{l-j}\bstwomat{x}{j-1}(-1)^{j-1}(T^q-T)^{j-1}}\in K,
\end{equation}  hence
$ |c|= q^{\frac{y(q-1)+l(q+1)}{2}-(j_{\max}(q-1)+l-q)}.$
Note that $a_{(g_1^y h^l + c g_1^{x} \Delta h^l)}(l+(q-1))= (-1)^{l+1}y(T^q-T)+(-1)^{l+1} c.$
Using the inequality $lq<\frac{y(q-1)+l(q+1)}{2}$, from~\eqref{value of the constant} we obtain
\begin{equation}
|a_{(g_1^y h^l + c g_1^{x} \Delta h^l)}((q-1)+l)|= q^{\frac{y(q-1)+l(q+1)}{2}-(j_{\max}(q-1)+l-q)}.
\end{equation}
Comparing $(q-1)+l$-th coefficients on both sides of~\eqref{T_p on linear combination for contradiction} we get
\begin{equation}
\label{absolute valuation for (q-1)+l th coefficient of T_p of linear combination}
|a_{T_\p(g_1^y h^l + c g_1^{x} \Delta h^l)}((q-1)+l)| = q^{y(q-1)+l(q+1)-(j_{\max}(q-1)+l-q)}.
\end{equation}
On the other hand, from \eqref{bound for 2nd coefficient of T_p operator on (g1)^y.h^l} we have
\begin{align*}
|a_{T_\p(g_1^y h^l + c g_1^{x} \Delta h^l)}((q-1)+l)| 
&\leq \max \{|a_{T_\p(g_1^y h^l)}((q-1)+l)|, |c|\cdot| a_{T_\p(g_1^{x} \Delta h^l)}((q-1)+l)|\}\\
&\leq \max\{q^{y_0(q-1)+l+q}, q^{\frac{y(q-1)+l(q+1)}{2}-(j_{\max}(q-1)+l-q)}\cdot q^{(x+1)(q-1)+l} \}
\end{align*}
where $y_0:= \mrm{min}\{y, l+(q-1) \}.$
Since $0\leq j_{\max} <l$, an easy verification shows that
$q^{y_0(q-1)+l+q}<q^{y(q-1)+l(q+1)-(j_{\max}(q-1)+l-q)}. $
Moreover, the inequality $(x+1)(q-1)+l< \frac{x(q-1)+(q^2-1)+l(q+1)}{2}$ implies  
$q^{\frac{y(q-1)+l(q+1)}{2}-(j_{\max}(q-1)+l-q)}\cdot q^{(x+1)(q-1)+l}<q^{y(q-1)+l(q+1)-(j_{\max}(q-1)+l-q)}.$
Therefore, we can conclude 
$$|a_{T_\p(g_1^y h^l + c g_1^{x} \Delta h^l)}((q-1)+l)|<q^{y(q-1)+l(q+1)-(j_{\max}(q-1)+l-q)},$$
which contradicts~\eqref{absolute valuation for (q-1)+l th coefficient of T_p of linear combination}. 
Hence, the $T_\p$-operator has no eigenform on $S_{k,l}(\GL_2(A))$ with eigenvalue $\pm P^{k/2}$, and the result now follows from Proposition~\ref{Direct sum pm case}.

We now consider the case when $(y,l) = (2q+1,1)$. In this case,  $S_{k,l}(\GL_2(A))=\langle g_1^{2q+1}h, g_1^{q}\Delta h\rangle$. By the $u$-series expansion, we get $a_{T_{\p}(g_1^{2q+1}h)}(1)=-P, a_{T_\p(g_1^q\Delta h)}(1)=0$ and $a_{T_\p(g_1^q\Delta h)}(q)=P^q$. This implies that, for any $(c_1,c_2) \in C^2 \setminus \{ (0,0)\}$, $T_\p(c_1 g_1^{2q+1}h+c_2 g_1^q\Delta h)\ne P^{q^2}(c_1 g_1^{2q+1}h+c_2 g_1^q\Delta h)$. This can be checked by comparing the $1$-st coefficient if $c_1 \ne 0$,
the $q$-th coefficient if $c_1=0$. Now, we are done by Proposition~\ref{Direct sum pm case}.

Finally, consider the case $l=0$: Here, the proof is exactly similar to $l\neq 0$, except that  
we need to consider $(q-1)$-th, $2(q-1)$-th coefficients and use 
the inequality $(x+2)(q-1)<\frac{x(q-1)+2(q^2-1)}{2}$.
\end{proof}

\subsection{Evidences to Question~\ref{example of direct sum at level P} for prime ideals $\p$:}
In this section, we give evidences in the support of  Question~\ref{example of direct sum at level P} 
for prime ideals $\p$. In this direction, we  need a proposition, which is a generalization of a result of Gekeler (cf.~\cite[Corollary 7.6]{Gek88}),
where he proved that $T_\p h = Ph$ for any prime ideal $\p=(P)$.
We now show that this result continues to hold for $f\in M_{k,1}(\Gamma_0(\mfm))$ with $a_f(1)\ne 0$.  
\begin{prop}\label{Type 1 eigenforms}
Suppose the $u$-series expansion of  $f\in M_{k,1}(\Gamma_0(\mfm))$ at $\infty$
is given by $\sum_{j=0}^\infty a_f(j(q-1)+1)u^{j(q-1)+1}$ with $a_f(1)\ne 0$. 
If $T_\p f = \lambda f$ for some $\lambda\in C$, then $\lambda=P.$
In particular, 
$T_\p f =   P^{\frac{k}{2}} f$ can happen only when  $k=2$.
\end{prop}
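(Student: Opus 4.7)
The plan is to isolate the coefficient of $u^{1}$ in the $u$-series expansion of $T_{\p} f$ and compare it with that of $\lambda f$. Since $f$ has type $l=1$, only exponents of the form $j(q-1)+1$ appear, so the lowest non-trivial coefficient to test is the one at $u^{1}$, which is $a_{f}(1)$.

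The main computation is to evaluate $a_{T_{\p} f}(1)$. Using the explicit formula for the Fourier coefficients of the image under $T_{\p}$ already invoked in the preceding theorems (cf.~\cite[Example 7.4]{Gek88}), one has
\[
a_{T_{\p} f}(1) = \sum_{0 \le j < 1} \binom{0}{j} P^{\,1-j}\, a_{f}\!\bigl(j(q-1)+1\bigr) = P \cdot a_{f}(1),
\]
because the summation collapses to the single term $j=0$. Comparing with $a_{\lambda f}(1)=\lambda\, a_{f}(1)$ and invoking the hypothesis $a_{f}(1)\neq 0$, we immediately deduce $\lambda = P$.

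For the final assertion, suppose further that $\lambda = P^{k/2}$. Combining with $\lambda = P$ gives the identity $P = P^{k/2}$ inside $K$. Taking the $\tfrac{1}{T}$-adic valuation (equivalently, comparing degrees, since $P$ is a monic irreducible polynomial of positive degree) yields $\deg P = \tfrac{k}{2}\deg P$, so $k=2$.

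The only subtle point is the justification of the coefficient formula used in the first step; but this is exactly the formula for $T_{\p}$ applied at type $l=1$ that is used repeatedly in the preceding proofs, so no additional ingredient is needed. There is no real obstacle: the proposition reduces to a one-line reading of the lowest Fourier coefficient.
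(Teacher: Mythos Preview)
Your overall strategy---isolating the coefficient of $u$ in $T_{\p}f$ and comparing with $\lambda\,a_f(1)$---is exactly what the paper does, and the identity $a_{T_{\p}f}(1)=P\,a_f(1)$ is indeed all one needs. The gap is in the justification. The coefficient formula from \cite[Example~7.4]{Gek88} that you invoke, as used in the preceding Theorems~\ref{Conjecture of BV for dimension 1} and~\ref{Direct sum decomposition dim 2 case}, was applied there \emph{under the standing hypothesis $\deg P=1$}. Proposition~\ref{Type 1 eigenforms}, however, is stated for an arbitrary prime ideal $\p$ (and arbitrary level $\m$ with $\p\nmid\m$), and in that generality the $u$-coefficients of $T_{\p}f$ are governed by the Goss polynomials $G_{i,P}$ attached to the lattice $\Lambda_P=\ker(\rho_P)$, not by the simple binomial sum you quote.

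The paper bridges this by citing \cite[Proposition~5.2]{Arm11}, which gives for any prime $\p$ of degree $d$
\[
T_{\p}f \;=\; P^k\sum_{j\ge 0}a_f\bigl(j(q-1)+1\bigr)\,u_{\p}^{\,j(q-1)+1} \;+\; \sum_{j\ge 0}a_f\bigl(j(q-1)+1\bigr)\,G_{j(q-1)+1,P}(Pu),
\]
and then checks: (i) the $u_\p=u^{q^d}+\cdots$ piece contributes nothing to the coefficient of $u$; (ii) the recursion $G_{i,P}(X)=X\bigl(G_{i-1,P}(X)+\alpha_1G_{i-q,P}(X)+\cdots\bigr)$ forces the coefficient of $u$ in $G_{j(q-1)+1,P}(Pu)$ to vanish for $j>0$; and (iii) $G_{1,P}(X)=X$, so the $j=0$ term yields $P\,a_f(1)\,u$. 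Once $a_{T_{\p}f}(1)=P\,a_f(1)$ is established this way, your comparison argument and the deduction $k=2$ from $P=P^{k/2}$ go through verbatim. So the fix is simply to replace the appeal to \cite[Example~7.4]{Gek88} by this Goss polynomial computation valid for all $\p$.
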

\begin{proof}
Let $G_{i,P}(X)$ denote the $i$-th Goss polynomial corresponding 
to the lattice $\Lambda_P= \ker(\rho_P)=\{x\in C \mid \rho_P(x)=0\}$,
where $\rho_P$ is the Carlitz module with value at $P$. 
By~\cite[Proposition 5.2]{Arm11} (the normalization here is different from there), we have 
\begin{equation}
\label{coefficient of u}
T_\p f  = P^k\sum_{j\geq 0}a_f(j(q-1)+1)(u_\p)^{j(q-1)+1} + \sum_{j\geq 0}a_f(j(q-1)+1)G_{j(q-1)+1,P}(P u),
\end{equation}
where $u_\p(z)=u(Pz)=u^{q^d}+\cdots.$
To determine $\lambda$,  we wish to compute the coefficient of $u$ 
in the $u$-series expansion of $T_\p f$. In~\eqref{coefficient of u}, the term involving $u_\p$ does not contribute
to the coefficient of $u$.
By~\cite[Proposition 3.4(ii)]{Gek88}, we know that
$$ G_{i,P}(X)=X(G_{i-1,P}(X)+\alpha_1 G_{i-q,P}(X)+\cdots + \alpha_j G_{i-q^j,P}(X)+\cdots).$$
In $G_{i,P}(Pu)$, the coefficient of $u$ in $G_{j(q-1)+1,P}(P u)$ is $0$ for $j>0$.
Since $G_{1,P}(X)=X$ (cf. \cite[Proposition 3.4(v)]{Gek88}), we can conclude that 
$T_\p f= Pa_f(1)u+ \mathrm{higher\ terms}.$
By comparing the coefficient of $u$ on both sides, we get $\lambda=P$.
\end{proof}
\begin{remark}
In Proposition~\ref{Type 1 eigenforms}, Goss polynomials, which occur as the coefficients of $T_\p f$,
are very difficult to handle if $l \neq 1$ (cf. \eqref{coefficient of u} and \cite[Proposition 5.2]{Arm11}). 
So, we have restricted ourselves to $l=1$ in the last proposition.

\end{remark}
We now give some instances where Question~\ref{example of direct sum at level P} for prime ideals $\p$
has an affirmative answer.
\begin{prop}
\label{exple1}
For any prime ideal $\p$, Question~\ref{example of direct sum at level P} is true for level $\p$ in the following cases:
\begin{enumerate}[(1)]
\item   \begin{enumerate}
     \item $1\leq l \leq q-2$ and $k=2l+\alpha(q-1)$ where $\alpha \in \{0,\ldots, l\}$,
     \item $l=0$ and $k=\beta(q-1)$ where $\beta\in \{1, \ldots , q+1\}$,
      \item $l=1$ and $k=\alpha(q-1)+(q+1)$ where $\alpha\in \{0,\ldots, q\}$.
    \end{enumerate}
\item $k\leq 3q.$
\end{enumerate}
\end{prop}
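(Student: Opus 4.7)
The plan is to reduce each listed case to a single scalar computation by exploiting the low dimension of $S_{k,l}(\GL_2(A))$. First I would verify, using the dimension formula $\dim M_{k,l}(\GL_2(A)) = \big[(k-l(q+1))/(q^2-1)\big] + 1$ and the fact that $\bigoplus_{k,l} M_{k,l}(\GL_2(A))$ is generated by $g_1$ and $h$, that in each of (1)(a), (1)(b), (1)(c) one has $\dim S_{k,l}(\GL_2(A)) \leq 1$: in (1)(a), only $\alpha = l$ yields a nonzero cuspidal space (spanned by $h^l$); in (1)(b), only $\beta = q+1$ yields a nonzero cuspidal space (spanned by $\Delta$); in (1)(c), each $\alpha$ yields a one-dimensional space spanned by $g_1^\alpha h$. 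When the dimension is zero, all three parts of Question~\ref{example of direct sum at level P} are vacuous, so only the one-dimensional situations require actual work.

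In the one-dimensional situations, $T_\p$ acts as a scalar $\lambda \in C$, so diagonalizability is automatic; Question~\ref{example of direct sum at level P}(1) becomes $\lambda \neq 0$, Question~\ref{example of direct sum at level P}(2) becomes $\lambda \neq \pm P^{k/2}$ by Proposition~\ref{Direct sum pm case}, and Question~\ref{example of direct sum at level P}(3) then follows from the discussion of \S\ref{Discussion on (2)} combined with \cite[Remarks 2.4 and 2.17]{BV20}. To compute $\lambda$ I would apply Armana's formula for $T_\p f$ (exactly as in the proof of Proposition~\ref{Type 1 eigenforms}), together with the elementary identity $G_{i,P}(X) = X^i$ valid for $1 \leq i \leq q-1$, so that the coefficient of $u^i$ in $G_{i,P}(Pu)$ equals $P^i$. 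For (1)(c) the generator $g_1^\alpha h$ has $a_f(1) = -1 \neq 0$, so Proposition~\ref{Type 1 eigenforms} applies directly and yields $\lambda = P$. For (1)(a) with $\alpha = l$, comparing the coefficient of $u^l$ on both sides of $T_\p(h^l) = \lambda\, h^l$ yields $\lambda = P^l$; for (1)(b) with $\beta = q+1$, the analogous comparison of the $u^{q-1}$-coefficient in $T_\p \Delta = \lambda \Delta$ yields $\lambda = P^{q-1}$.

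In all three cases $\lambda$ is a positive power of $P$ and hence nonzero, and one checks $\lambda^2 \neq P^k$ directly: in (1)(a), $k - 2l = l(q-1) > 0$; in (1)(b), $k - 2(q-1) = (q-1)^2 > 0$; in (1)(c), $k - 2 \geq q - 1 > 0$. Thus $\lambda \notin \{0, \pm P^{k/2}\}$, completing the proof of (1). For (2), the constraint $k \leq 3q$ together with $k \equiv 2l \pmod{q-1}$ and $\dim S_{k,l}(\GL_2(A)) \neq 0$ forces $(k,l)$ into a subcase already covered by (1): the ranges $l \in \{0,1\}$ are handled directly by (1)(b) and (1)(c), while for $l \geq 2$ one gets $\alpha = (k-2l)/(q-1) \leq (3q-2l)/(q-1) \leq l$ (using $q \geq 3$), placing us in (1)(a). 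The main obstacle in extending the approach beyond the listed ranges is the failure of $G_{i,P}(X) = X^i$ for $i \geq q$ (cf.\ the Remark after Proposition~\ref{Type 1 eigenforms}), which is precisely what caps $\alpha$ and $\beta$ in the statement.
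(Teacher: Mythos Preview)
Your overall strategy matches the paper's: reduce to $\dim S_{k,l}(\GL_2(A))\le 1$, so $T_\p$ acts by a scalar $\lambda$, then check $\lambda\notin\{0,\pm P^{k/2}\}$ and invoke Proposition~\ref{Direct sum pm case} together with \S\ref{Discussion on (2)}. For (1)(c) your use of Proposition~\ref{Type 1 eigenforms} is exactly what the paper does, and your reduction in (2) to the subcases of (1) is the same as the paper's (the paper phrases it as ``$k\le 3q$ forces $x\le 2$ when $l\ge 2$'', which is your inequality with the integrality of $\alpha$ made explicit).

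The gap is in (1)(a) and (1)(b). The paper obtains $T_\p h^l=P^l h^l$ and $T_\p\Delta=P^{q-1}\Delta$ by citing \cite[(9)]{JP14}\,/\,\cite[Theorem~3.17]{Pet13} and \cite[Corollary~7.5]{Gek88}, which rely on the $A$-expansion structure of $h^l$ and $\Delta$. You instead propose to read off $\lambda$ by comparing the $u^l$ (resp.\ $u^{q-1}$) coefficient in Armana's formula, using $G_{i,P}(X)=X^i$ for $i\le q-1$. That handles only the $j=0$ summand; you do not verify that the remaining terms $\sum_{j\ge 1} a_f\bigl(j(q-1)+l\bigr)\,G_{j(q-1)+l,P}(Pu)$ contribute nothing to that coefficient. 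For $l=1$ this vanishing holds because $G_i(X)$ is divisible by $X^2$ for every $i\ge 2$---which is exactly the mechanism behind Proposition~\ref{Type 1 eigenforms}---but for $l\ge 2$ the analogous statement fails (already $G_{q+1}(X)=X^{q+1}+\alpha_1 X^2$ has an $X^2$ term), so one would have to control the low-order monomials of $G_i$ at the specific indices $i$ where $a_{h^l}(i)$ or $a_\Delta(i)$ is nonzero, for \emph{arbitrary} $\deg P$. The paper explicitly warns about this obstacle in the Remark following Proposition~\ref{Type 1 eigenforms} (``Goss polynomials \ldots\ are very difficult to handle if $l\neq 1$'') and avoids it by citation. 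As written, your argument for (1)(a) with $l\ge 2$ and for (1)(b) is incomplete; the cleanest fix is simply to invoke the references above rather than attempt the coefficient comparison.
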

\begin{proof}
Note that in all of these cases $\dim S_{k,l}(\GL_2(A))\leq 1.$ Hence the $T_\p$-operator is diagonalizable on $S_{k,l}(\GL_2(A)).$  As in our earlier discussion, 
Question~\ref{example of direct sum at level P} has an affirmative answer for $\p$ 
if we show that $\ker(T_\p)=0$ and the $T_\p$-operator has no eigenform on $S_{k,l}(\GL_2(A))$ with eigenvalues $\pm P^{\frac{k}{2}}$.
We prove these statements in all cases.

\begin{enumerate}[(1)]
\item
 \begin{enumerate}
      \item  Since $l>0$, $M_{k,l}(\GL_2(A))=S_{k,l}(\GL_2(A))$. 
        \begin{itemize}
         \item If $\alpha \in \{0,\ldots, l-1\}$, then $\dim S_{2l+\alpha(q-1),l}(\GL_2(A))=0$ and the result follows trivially.  
         \item If $\alpha=l$, then $\dim S_{2l+\alpha(q-1),l}(\GL_2(A))=1$ and $S_{2l+\alpha(q-1),l}(\GL_2(A))= \langle h^l\rangle$.             
         By~\cite[(9)]{JP14} (or by \cite[Theorem 3.17]{Pet13}), the $T_\p$-operator acts on $h^i$ by $P^i$
             for $1\leq i\leq q-2.$ Since $P^l\ne \pm P^{\frac{l(q+1)}{2}}$ for $1\leq l \leq q-2$ the result follows.
       \end{itemize}

\item When $l=0$, we prove the required claim in two steps.
      \begin{itemize}
       \item  For $\beta\in \{1, \ldots , q\}$, $M_{\beta(q-1),0}(\GL_2(A))=\langle g_1^\beta 
              \rangle$. Therefore,
              $S_{\beta(q-1),0}(\GL_2(A))=\{0\}$ and the result follows. 
 
       \item  If $\beta = q+1$, $S_{q^2-1,0}(\GL_2(A))=\langle \Delta\rangle.$  
              By~\cite[Corollary 7.5]{Gek88}, we have $T_\p(\Delta)= P^{q-1}\Delta$. Since $P^{q-1}\ne \pm P^{\frac{q^2-1}{2}}$, the result follows.
       \end{itemize}
       
\item If $\alpha \in \{0,\ldots, q\} $, $S_{k,1}(\GL_2(A)) = \langle g_1^\alpha h \rangle$. Since $a_{g_1^\alpha h}(1)\ne 0$, by Proposition~\ref{Type 1 eigenforms}, we have $T_\p(g_1^\alpha h)= P g_1^\alpha h$, the result follows.

\end{enumerate}

\item Let $0\leq l \leq q-2.$ If $k\not \equiv 2l \pmod {q-1}$, then $M_{k,l}(\GL_2(A))=\{0\}$ and Question~\ref{example of direct  
      sum at level P} is trivially true. So, we only consider the cases $k \equiv 2l \pmod {q-1}$ i.e $k=2l+x(q-1)$ for some $x\in \N\cup \{0\}$.
      The condition $k \leq 3q$ implies $x\leq 4$. 
      \begin{itemize}
         \item If $x<l$, then $\dim M_{k,l}(\GL_2(A))=0$ and the result follows. 
         \item If $x=l$, then $k=l(q+1)$. If $l \neq 0$, then $S_{l(q+1), l}(\GL_2(A))=\langle h^l \rangle$.
               So, we are back to case 1(a). If $l=0$, then $S_{0,0}=\{0\}$ and the result follows.
      \end{itemize}
      Therefore, the remaining cases of interest are $l < x \leq 4$. If $l\geq 2$, the inequality 
      $k\leq 3q$ forces that $x\leq 2$ and we are back to the case $x\leq l$.
       So, it is enough to consider for $l\in \{0,1\}$ with $ l < x \leq 4$.
     \begin{itemize}
          \item For $l=0$: If $(q,x)\ne (3,4)$, then $M_{x(q-1),0}(\GL_2(A)) = \langle g_1^x \rangle$ 
                and $S_{x(q-1),0}(\GL_2(A))=\{0\}$ and the result follows. 
                If $(q,x)=(3,4)$, then $k=(q+1)(q-1)$, we are 
                back to  case $1(b)$.
          \item For $l=1$: we have $k=(x-1)(q-1)+(q+1)$ where $1<x\leq 3.$ Since $q\geq 3$, we are 
                back to  case $1(c)$.
     \end{itemize}
\end{enumerate}
This completes the proof of the proposition.
\end{proof}
We remark that our Proposition~\ref{exple1} is similar to Theorem 5.8, Corollary 5.11 and Theorem 5.14 of~\cite{BV19a} for $\p= (T)$-case.
In a contrast to  Proposition~\ref{exple1}, 
in the next proposition, we consider the situation with $\m\ne A$ and $\dim S_{k,l}(\Gamma_0(\m))=2$ 
satisfying Question~\ref{example of direct sum at level P} for level $\p\m$.
%
 
\begin{prop}
\label{exple2}
For $\deg m=1$ and $\p\nmid \m$, Question~\ref{example of direct sum at level P} is true for level  $\p\m$ 
when
\begin{enumerate}[(i)]
\item  $l> \frac{q-1}{2}$ and $k=2l-(q-1)$, or
\item  $l=1$ and $k=q+1$.
\end{enumerate}
 \end{prop}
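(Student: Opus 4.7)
The strategy is to verify, in each case, all three conditions of Question~\ref{example of direct sum at level P} for level $\p\m$. Combining the discussion of \S\ref{Discussion on (2)} with Proposition~\ref{Direct sum pm case}, this reduces to showing three properties of $T_\p$ acting on $S_{k,l}(\Gamma_0(\m))$: injectivity, diagonalizability, and the absence of eigenforms with eigenvalues $\pm P^{k/2}$.

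For case (ii), $l=1$ and $k=q+1$: the space $S_{k,l}(\GL_2(A)) = \langle h \rangle$ is one-dimensional, and Gekeler's formula (or Proposition~\ref{Type 1 eigenforms}) gives $T_\p h = P h$. Writing $\m=(Q)$, the oldform lifts $\delta_1(h)$ and $\delta_Q(h)$ are linearly independent, and since the setup of Proposition~\ref{exple2} postulates $\dim S_{k,l}(\Gamma_0(\m)) = 2$, they span the whole space. Because $\p \nmid \m$, $T_\p$ at level $\m$ commutes with the lifting operators $\delta_1$ and $\delta_Q$ (in analogy with the classical $T_p B_d = B_d T_p$ relation for $p\nmid N$), so $T_\p$ acts on $S_{k,l}(\Gamma_0(\m))$ as the scalar $P$. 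The three properties are then immediate: $P\ne 0$ gives injectivity, a scalar operator is trivially diagonalizable, and the inequality $P \ne \pm P^{(q+1)/2}$ (valid for $q\ge 3$) rules out the bad eigenvalues.

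For case (i), $l > (q-1)/2$ and $k = 2l-(q-1)$: a direct dimension count shows $S_{k,l}(\GL_2(A)) = 0$, so the $2$-dimensional space $S_{k,l}(\Gamma_0(\m))$ contains no level-$1$ oldforms. The natural tool is the Atkin-Lehner involution $W_\m^{(\m)}$: by Theorem~1.1 of~\cite{Val22} it commutes with $T_\p$, and Lemma~\ref{Atkin-Lehner operator is an involution} gives $(W_\m^{(\m)})^2 = Q^{2l-k} = Q^{q-1}$, so its eigenvalues are $\pm Q^{(q-1)/2}$. If both eigenvalues actually occur, $S_{k,l}(\Gamma_0(\m))$ splits into two one-dimensional $W_\m^{(\m)}$-eigenlines on each of which $T_\p$ acts as a scalar; these two scalars must then be computed from $u$-expansions of explicit generators of the eigenspaces and shown to avoid $0$ and $\pm P^{k/2}$, in the spirit of the Fourier-coefficient analysis of Theorem~\ref{Direct sum decomposition dim 2 case}.

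\textbf{Main obstacle.} Case (i) is where the real work lies, and for two reasons. First, one must explicitly construct generators of $S_{k,l}(\Gamma_0(\m))$ without the convenience of level-$1$ lifts; the natural candidates are built from low-weight forms available at level $\m$ such as $E_Q$ (cf.~\eqref{E_T}) combined with the Eisenstein series and Poincar\'e series of \S\ref{Basic theory}, or obtained via the harmonic cocycle dictionary. Second, the Atkin-Lehner split above requires $W_\m^{(\m)}$ to act non-scalarly on the $2$-dimensional space; in the degenerate case one must fall back on a direct $u$-series degree bound for the coefficients of $T_\p$ applied to a chosen basis, exactly as in the proof of Theorem~\ref{Direct sum decomposition dim 2 case}. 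The small range of weights $2 \leq k \leq q-3$ occurring in case (i) should keep both computations manageable.
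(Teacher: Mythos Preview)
Your approach to case~(ii) is correct and in fact cleaner than the paper's. The paper works with an explicit basis $\{\Delta_T E_T,\, \Delta_W E_T\}$ of $S_{q+1,1}(\Gamma_0(T))$ (built from the level-$T$ forms $\Delta_T$, $\Delta_W$, $E_T$ of \cite{DK2}), uses the identity $h=-\Delta_W E_T$ together with the commutation of $T_\p$ and $W_T^{(T)}$ (Theorem~\ref{T_P and A-L commutes}) and the relation $E_T|W_T^{(T)}=-E_T$ to transfer the eigenvalue $P$ from one basis vector to the other, and concludes $T_\p\equiv P$. Your route via the oldform basis $\{h,\,\delta_Q h\}$ and the relation $T_\p\,\delta_Q=\delta_Q\,T_\p$ (valid since $\p\nmid\m$; this is the standard coset argument as in Proposition~\ref{U_p, U_Q commutes and U_P, T_Q commutes}) reaches the same scalar action more directly. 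Either way, $T_\p\equiv P$ on the whole $2$-dimensional space, and the three conditions follow as you say.

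Case~(i) is where your proposal goes wrong. You read the sentence preceding the Proposition as asserting $\dim S_{k,l}(\Gamma_0(\m))=2$ in both cases, but that remark is only motivational and in fact applies only to case~(ii). For $l>\frac{q-1}{2}$ and $k=2l-(q-1)$ the weight lies in the range $2\le k\le q-3$, and for such small weights one has $S_{k,l}(\Gamma_0(\m))=\{0\}$ when $\deg m=1$ (the paper cites \cite[Proposition~4.1]{DK2} for $\m=(T)$). All three parts of Question~\ref{example of direct sum at level P} are then vacuously true: $T_\p$ is trivially injective and diagonalizable on the zero space, and there are no eigenforms at all. Your proposed Atkin--Lehner splitting, construction of explicit generators, and Fourier-coefficient bounds are therefore aimed at an empty space; the correct proof of~(i) is a single line.
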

%
\begin{proof}
 We may assume that $\m=(T)$, since a similar calculation works for any ideal $\m$ with $\deg m=1$. We now follow the strategy as in the proof of the Proposition~\ref{exple1}.
 \begin{enumerate}[(i)]
\item In this case, $S_{k,l}(\Gamma_0(T))=\{0\}$ (cf.~\cite[Proposition 4.1]{DK2}) and the result
         follows trivially.
\item      First, we show that the operator $T_\p-P$ is zero  on $S_{q+1,1}(\Gamma_0(T))$.
      Recall that, $\Delta_T(z) := \frac{g_1(Tz)-g_1(z)}{T^q-T}, \Delta_W(z) := \frac{T^qg_1(Tz)-Tg_1(z)}{T^q-T}\in M_{q-1,0}(\Gamma_0(T))$.
      
      By~\cite[Proposition 4.3(3)]{DK2}, $\dim_C\ S_{q+1,1}(\Gamma_0(T)) = 2$ and a basis is given by  
      $\{\Delta_T E_T$, $\Delta_W E_T\}$. By~\cite[Proposition 4.3(8)]{DK2}), $h= -\Delta_W E_T$.
      Since $T_\p h= Ph$, we obtain $T_\p(\Delta_W E_T)= P\Delta_W E_T$. Note that $\Delta_T = -T^{-1}\Delta_W|W_T^{(T)}$ and $T_\p W_T^{(T)}=W_T^{(T)} T_\p$ (cf. Theorem~\ref{T_P and A-L commutes}), using $E_T|_{2,1}W_T^{(T)}=-E_T$ (cf. \cite[Proposition 3.3]{DK1}),  we get
      \begin{align*}
        T_\p(\Delta_T E_T) & = T_\p((T^{-1}\Delta_W E_T)|W_T^{(T)}) = (T_\p(T^{-1}\Delta_W E_T))|W_T^{(T)} \\
                           & = T^{-1} ( P\Delta_W E_T)|W_T^{(T)}  = P\Delta_TE_T.
      \end{align*}
     Thus, $T_\p \equiv P$ on $S_{q+1,1}(\Gamma_0(T))$. So, $T_\p$-operator is injective, diagonalizable on $S_{q+1,1}(\Gamma_0(T))$, which proves 
    Question~\ref{example of direct sum at level P}(1). Question~\ref{example of direct sum at level P}(2)  follows from Proposition~\ref{Direct sum pm case}.     
      Finally, Question~\ref{example of direct sum at level P}(3) follows from the diagonalizability of $T_\p$-operator on $S_{q+1, 1}(\Gamma_0(T))$.
    \end{enumerate}
\end{proof}
\subsection{Counterexample to Question~\ref{example of direct sum at level P}(2)}
In the section,  we show that the direct sum decomposition~\eqref{direct sum decomposition level pm in corollary} does not hold if $\m\ne A$ and $(k,l)=(2,1).$  We first prove a result which is of independent interest.
\begin{lem}
	\label{Eigenvalue of E_P}
	Let $\p_1, \p_2$ be two distinct prime ideals of $A$ generated by monic irreducible polynomials $P_1, P_2$, respectively. Then,  $T_{\p_1} E_{P_2}= P_1E_{P_2}.$
\end{lem}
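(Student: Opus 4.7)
The plan is a direct computation. For $(k,l) = (2,1)$, Definition~\ref{Definitions of T_p and U_p operators} unfolds to
\[
T_{\p_1} f(z) = \sum_{\deg Q < \deg P_1} f\!\left(\frac{z+Q}{P_1}\right) + P_1^2\, f(P_1 z),
\]
which I will apply to $f = E_{P_2} = E - P_2\, E(P_2\,\cdot\,)$. The heart of the argument is the identity
\[
F(w) \;:=\; \sum_{\deg Q < \deg P_1} E\!\left(\frac{w+Q}{P_1}\right) + P_1^2\, E(P_1 w) \;=\; P_1\, E(w) \qquad (w \in \Omega).
\]

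To prove this identity, substitute the lattice-sum expression $E(z) = \tilde\pi^{-1}\sum_{a\,\textrm{monic}}\sum_{b \in A}\frac{a}{az+b}$ and interchange orders of summation. For fixed monic $a$, the relevant inner double sum is $\sum_{Q,b} \frac{a}{aw + aQ + P_1 b}$. When $\gcd(a, P_1) = 1$, the map $(Q,b) \mapsto aQ + P_1 b$ is a bijection of $\{Q : \deg Q < \deg P_1\} \times A$ onto $A$ (for each $B \in A$, a unique $Q$ satisfies $aQ \equiv B \pmod{P_1}$, then $b$ is determined), yielding $\sum_{B \in A} \frac{a}{aw+B}$. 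When $P_1 \mid a$, writing $a = P_1 a_0$ shows the same sum equals $q^{\deg P_1}\sum_{B'} \frac{a}{aw + P_1 B'}$, which vanishes because $q^{\deg P_1} \equiv 0 \pmod p$. Assembling both cases gives $\sum_Q E((w+Q)/P_1) = P_1 E(w) - P_1^2 E(P_1 w)$, which is exactly the displayed identity.

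To finish, apply the identity at $w = z$ and at $w = P_2 z$. For the latter, the $A$-periodicity $E(w+c) = E(w)$ for $c \in A$ and the fact that $P_2$ is a unit modulo $P_1$ allow one to re-index $\sum_Q E(P_2(z+Q)/P_1) = \sum_Q E((P_2 z + Q)/P_1)$, so the identity still applies. Combining,
\[
T_{\p_1} E_{P_2}(z) \;=\; F(z) - P_2\, F(P_2 z) \;=\; P_1\, E(z) - P_1 P_2\, E(P_2 z) \;=\; P_1\, E_{P_2}(z).
\]
As an alternative shortcut, once one knows $T_{\p_1} E_{P_2}$ is proportional to $E_{P_2}$, the nonvanishing $a_{E_{P_2}}(1) = a_E(1) \neq 0$ (since $E(P_2 z)$ has $u$-expansion supported on powers $u^n$ with $n \geq q^{\deg P_2} > 1$) and Proposition~\ref{Type 1 eigenforms} together force the proportionality constant to equal $P_1$. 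The main obstacle is the bookkeeping in the lattice-sum identity, particularly the characteristic-$p$ vanishing of the $P_1 \mid a$ contributions — a Drinfeld analogue of the classical identity governing the Hecke eigenvalues of Eisenstein-type series.
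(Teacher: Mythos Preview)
Your proof is correct and follows essentially the same approach as the paper: both reduce to the lattice-sum computation $\sum_{Q,b}\frac{1}{aw+aQ+P_1b}$ and handle it via the bijection $(Q,b)\mapsto aQ+P_1b$ when $(a,P_1)=1$ together with the characteristic-$p$ vanishing when $P_1\mid a$. The only organizational difference is that the paper works directly with the $A$-expansion $E_{P_2}(z)=\sum_{P_2\nmid a}au(az)$, whereas you first establish the identity $F(w)=P_1E(w)$ for the non-modular function $E$ and then deduce the result for $E_{P_2}=E-P_2E(P_2\,\cdot\,)$, at the cost of the extra (but straightforward) re-indexing step using $A$-periodicity of $E$.
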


\begin{proof}
	By~\cite[(8.2)]{Gek88}, the function $E(z)=\sum_{a\in A_+}au(az)$, where $A_+$ denotes the set of all monic polynomials in $A.$ Hence,
$	E_{P_2}(z) = \sum_{a\in A_+}au(az)- P_2\sum_{a\in A_+}au(P_2az) = \sum_{a\in A_+,\ P_2\nmid a} au(az).$
	We now use an argument in the proof of~\cite[Theorem 2.3]{Pet13} to get
	\begin{align*}
	T_{\p_1} E_{P_2} 
	&= \sum_{\substack{Q\in A\\ \deg Q<\deg P_1}}E_{P_2} \left(\frac{z+Q}{P_1}\right) +P_1^2 E_{P_2}(P_1z)\\
	&= \sum_{\substack{Q\in A\\ \deg Q<\deg P_1}} \sum_{\substack{a\in A_+\\ P_2\nmid a}} au\left(a\frac{z+Q}{P_1}\right) + P_1^2 \sum_{\substack{a\in A_+\\ P_2\nmid a}} au(P_1az)\\
	&= \frac{1}{\tilde{\pi}} \sum_{\substack{Q\in A\\ \deg Q<\deg P_1}} \sum_{\substack{a\in A_+\\ P_2\nmid a}} \sum_{b\in A} \frac{aP_1}{az+aQ+P_1b} + P_1 \sum_{\substack{a\in A_+\\ P_2\nmid a}} P_1a u(P_1az)\\ 
	&= \frac{1}{\tilde{\pi}}  \sum_{\substack{a\in A_+\\ P_2\nmid a}} aP_1 \sum_{b\in A} \sum_{\substack{Q\in A\\ \deg Q<\deg P_1}} \frac{1}{az+aQ+P_1b} + P_1 \sum_{\substack{a\in A_+\\ P_2\nmid a}} P_1a u(P_1az)\\
	&= P_1\sum_{\substack{a\in A_+\\ P_1P_2\nmid a}}au(az) + P_1\sum_{\substack{a\in A_+\\ P_2\nmid a}}P_1au(P_1 az) = P_1 \sum_{\substack{a\in A_+\\ P_2\nmid a}} au(az)= P_1 E_{P_2}.
	\end{align*}
	This completes the proof of the Lemma.
\end{proof}
We now show that, if $\m\ne A$ and $(k,l)=(2,1)$, then there are non-zero Drinfeld cusp forms which are both $\p$-oldforms and $\p$-newforms.

\begin{prop}
\label{counter example for direct sum decomposition for square free p-part}
Suppose $\m\ne A$. For any prime ideal $\p \nmid \m$,  we have 
$$S_{2,1}^{\p-\old}(\Gamma_0(\p\m))\cap S_{2,1}^{\p-\new}(\Gamma_0(\p\m))\ne\{0\}.$$

\end{prop}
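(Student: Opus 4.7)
The plan is to exhibit an explicit non-zero element of the intersection built out of the form $E_Q$ from Lemma~\ref{Eigenvalue of E_P}. Since $\m\neq A$, I pick a prime ideal $\mfq=(Q)$ with $\mfq\mid\m$; necessarily $\mfq\neq\p$. Then $E_Q\in S_{2,1}(\Gamma_0(\mfq))\subseteq S_{2,1}(\Gamma_0(\m))$, and by Lemma~\ref{Eigenvalue of E_P} we have $T_\p E_Q=PE_Q$. My candidate is
\[
f:=\delta_P(E_Q)-\delta_1(E_Q)\in S_{2,1}^{\p-\old}(\Gamma_0(\p\m)),
\]
which lies in the $\p$-oldspace by construction. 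Non-vanishing of $f$ follows from comparing $u$-expansions at $\infty$: since $E_Q=\sum_{a\in A_+,\,Q\nmid a}a\,u(az)$, the coefficient of $u$ in $\delta_1(E_Q)=E_Q$ equals $1$, whereas $\delta_P(E_Q)=P\cdot E_Q(Pz)$ has $u$-expansion starting in degree $q^{\deg P}>1$. Hence the coefficient of $u$ in $f$ equals $-1\neq 0$.

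The core step is to show $f\in\ker(\Tro{\p\m}{\m})\cap\ker(\TroA{\p\m}{\m})$. For this I would first derive, for arbitrary $\varphi_1,\varphi_2\in S_{k,l}(\Gamma_0(\m))$, the general trace formulas
\[
\Tro{\p\m}{\m}(\delta_1\varphi_1+\delta_P\varphi_2)=\varphi_1+P^{l-k}T_\p\varphi_2,
\]
\[
\TroA{\p\m}{\m}(\delta_1\varphi_1+\delta_P\varphi_2)=P^{l-k}T_\p\varphi_1+P^{2l-k}\varphi_2.
\]
These are consequences of Proposition~\ref{relation between Trace Atkin_Lehner and U_p operators} combined with the identities in~\eqref{delta_P and A-L together}; the second is simply~\eqref{Trace and delta P} applied to $f|W_\p^{(\p\m)}$ after using $\varphi_2|W_\p^{(\p\m)}=\delta_P\varphi_2$ and $(\delta_P\varphi_1)|W_\p^{(\p\m)}=P^{2l-k}\varphi_1$. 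Specializing to $(k,l)=(2,1)$, $\varphi_1=-E_Q$, $\varphi_2=E_Q$ and using $T_\p E_Q=PE_Q$, both right-hand sides vanish, so $f\in S_{2,1}^{\p-\new}(\Gamma_0(\p\m))$.

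The only delicate point in deriving the first formula is the vanishing $U_\p(\delta_P\varphi_1)=0$ on $S_{k,l}(\Gamma_0(\m))\hookrightarrow S_{k,l}(\Gamma_0(\p\m))$. A direct coset computation using $\psmat{P}{0}{0}{1}\psmat{1}{R}{0}{P}=P\cdot\psmat{1}{R}{0}{1}$ (for $\deg R<\deg P$) and the invariance $\varphi_1|\psmat{1}{R}{0}{1}=\varphi_1$ yields $U_\p(\delta_P\varphi_1)=q^{\deg P}P^l\varphi_1$, which vanishes in $C$ because $q$ is a power of the residue characteristic $p$ and $\deg P\geq 1$. Conceptually, $E_Q$ realizes precisely a $T_\p$-eigenform on $S_{2,1}(\Gamma_0(\m))$ with eigenvalue $+P^{k/2}=P$, which is the obstruction identified in Proposition~\ref{Direct sum pm case}, so this example is entirely coherent with the general theory and I do not anticipate further difficulty beyond careful bookkeeping of the slash-operator normalizations.
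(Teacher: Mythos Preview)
Your proof is correct and follows essentially the same approach as the paper: the element $f=\delta_P(E_Q)-\delta_1(E_Q)$ is (up to sign) exactly the paper's $E_{P_2}-\delta_P E_{P_2}$, and your trace computations unwind the same identities from \eqref{delta_P and A-L together}, \eqref{Trace and delta P}, and Proposition~\ref{relation between Trace Atkin_Lehner and U_p operators}. You add a bit of useful extra detail—the general formulas for $\Tro{\p\m}{\m}$ and $\TroA{\p\m}{\m}$ on $\delta_1\varphi_1+\delta_P\varphi_2$, the explicit vanishing $U_\p(\delta_P\varphi_1)=q^{\deg P}P^l\varphi_1=0$, and the $u$-expansion argument for $f\neq 0$—but there is no genuine difference in strategy.
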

\begin{proof}
Let $\p_2$ be a prime divisor of $\m$ generated by a monic irreducible polynomial $P_2.$ Clearly, $0 \neq E_{P_2}-\delta_P E_{P_2} \in S_{2,1}(\Gamma_0(\p\m)).$  We show that
$E_{P_2}-\delta_P E_{P_2} \in S_{2,1}^{\p-\old}(\Gamma_0(\p \m))\cap S_{2,1}^{\p-\new}(\Gamma_0(\p \m)).$
By definition, $E_{P_2}-\delta_P E_{P_2} \in S_{2,1}^{\p-\old}(\Gamma_0(\p \m))$.
Combining~\eqref{Trace and delta P},~\eqref{delta_P and A-L together} and Lemma~\ref{Eigenvalue of E_P}, we get
\begin{equation}
\label{To show E_T-delta_p E_T is an old form trace 0}
\Tr_\m^{\p \m}(E_{P_2}-\delta_P E_{P_2}) =E_{P_2}-P^{-1}T_{\p}(E_{P_2}) = E_{P_2}-E_{P_2} =0.
\end{equation}
By~\eqref{delta_P and A-L together}, we deduce that 
\begin{equation*}
\label{To show E_T-delta_p E_T is an old form trace A-L 0}
\Tr_{\m}^{\p \m}((E_{P_2}-\delta_P E_{P_2})|W_\p^{(\p \m)}) = \Tr_{\m}^{\p \m}(E_{P_2}|W_{\p}^{(\p \m)}-(\delta_{P}E_{P_2})|W_{\p}^{(\p \m)})
= \Tr_{\m}^{\p \m}(\delta_{P}E_{P_2} -E_{P_2})=0.
\end{equation*}
This proves that $E_{P_2}-\delta_P E_{P_2} \in S_{2,1}^{\p-\new}(\Gamma_0(\p \m))$. The result follows.
\end{proof}

\begin{remark}
\label{counter example higher powers remark}
For $f\in S_{k,l}(\Gamma_0(\n))$, $T_\p(f^{q^n})=(T_\p(f))^{q^n}$
for any $n \in \N$. An argument similar to Proposition~\ref{counter example for direct sum decomposition for square free p-part} gives us 
\begin{equation}
\label{counter example higher degree}
0\ne E_{P_2}^{q^n}-{P^{q^n-1}} \delta_P E_{P_2}^{q^n} \in S_{2{q^n},1}^{\p-\old}(\Gamma_0(\p \m))\cap S_{2{q^n},1}^{\p-\new}(\Gamma_0(\p \m)). 
\end{equation}
Since $E$ behaves like a classical weight $2$ Eisenstein series, 
we believe that the phenomenon in~\eqref{counter example higher degree} may not happen for $l\ne 1$. 
\end{remark}
Proposition~\ref{counter example for direct sum decomposition for square free p-part} and Remark~\ref{counter example higher powers remark} 
imply that either one needs to reformulate the definition of $\p$-newforms for level $\p\m$
or exclude the cases above in formulating Question~\ref{example of direct sum at level P} for level $\p\m$.

\section{Oldforms and Newforms for square-free level $\n$}
\label{Oldforms and Newforms for square free level n}
In this section, we propose a definition of oldforms and newforms for Drinfeld modular forms of square-free level. We show that these spaces are invariant under the action of the Hecke operators. Throughout this section, we  assume that $\n$ is a square-free ideal of $A$ generated by a (square-free) monic polynomial $n\in A$.  Let $\p, \p_1$ be two prime ideals of $A$ generated by monic irreducible polynomials $P,P_1\in A$, respectively.

\begin{dfn}[Oldforms]
The space of oldforms of weight $k$, type $l$, and square-free level $\n$ is defined as
\begin{equation*}
S_{k,l}^{\old}(\Gamma_0(\n)):= \sum\limits_{\p|\n}(\delta_1,\delta_P)((S_{k,l}(\Gamma_0(\n/\p)))^2).
\end{equation*}
\end{dfn}
%
The lack of Petersson inner product for Drinfeld modular forms makes it difficult to define newforms. 
For classical modular forms, it is well-known that newforms can be characterized in terms of kernels of the Trace and twisted Trace operators (cf. \cite{Ser73}, \cite{Li75} for more details).
In this section, for Drinfeld modular forms, we adopt this approach to define newforms and  investigate their properties.

\begin{dfn}[Newforms]
\label{definition of newforms}
The space of newforms of weight $k$, type $l$, and square-free level $\n$ is defined as
$$S_{k,l}^{\new}(\Gamma_0(\n)):= \bigcap\limits_{\p|\n} (\Ker(\Tr_\frac{\n}{\p}^{\n})\cap \Ker({\Tr^\prime}_\frac{\n}{\p}^{\n})),
\ \mathrm{where} \ 
{\Tr^\prime}_\frac{\n}{\p}^{\n}f= {\Tr}_\frac{\n}{\p}^{\n}(f|_{k,l}W^{(\n)}_{\p}).$$
\end{dfn}

Next, we study the action of Hecke operators on $S_{k,l}^{\old}(\Gamma_0(\n))$, $S_{k,l}^{\new}(\Gamma_0(\n))$.
This depends on the commutativity of the (partial) Atkin-Lehner operators with the $T_\p$ and $U_\p$-operators.
In~\cite[Theorem 1.1]{Val22}, the author studied the commutativity of the (partial) Atkin-Lehner operators and the $T_\p$-operator and proved the following result.
\begin{thm}
\label{T_P and A-L commutes}
Let $\n,\p \subseteq A$ be ideals such that $\p \nmid \n$ and $\p$ is prime. 
For any ideal $\mathfrak{d}$ of $A$ such that $\mathfrak{d}||\n$, 
the actions of $T_{\p}W_{\mathfrak{d}}^{(\n)}$ and  $W_{\mathfrak{d}}^{(\n)}T_{\p}$ 
on $S_{k,l}(\Gamma_0(\n))$
are equal.
\end{thm}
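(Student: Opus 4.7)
The plan is to establish the identity at the level of right cosets modulo $\Gamma_0(\n)$. By Definition~\ref{Definitions of T_p and U_p operators}, $T_\p$ is given as a sum of slash actions by the representatives $\{\psmat{1}{Q}{0}{P} : \deg Q < \deg P\} \cup \{\psmat{P}{0}{0}{1}\}$, while $W := W_\mfd^{(\n)}$ is realized by a fixed matrix $\psmat{\alpha d_0}{\beta}{\gamma n}{\delta d_0}$ satisfying $\alpha\delta d_0^2 - \beta\gamma n = \zeta d_0$ for some $\zeta \in \F_q^\times$, where $d_0$ is the monic generator of $\mfd$. To prove $T_\p W = W T_\p$ as operators on $S_{k,l}(\Gamma_0(\n))$, it is enough to produce, for each representative $\alpha_i$ of $T_\p$, a representative $\alpha_i'$ and an element $\eta_i \in \Gamma_0(\n)$ such that $\alpha_i W = \eta_i W \alpha_i'$, and to check that $\alpha_i \mapsto \alpha_i'$ is a bijection on the set of representatives. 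Note that by Lemma~\ref{Atkin-Lehner operator is an involution}, any two representatives of $W$ differ by left multiplication by an element of $\Gamma_0(\n)$, so we have freedom to adjust $W$ on either side.

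For the representatives of the form $\psmat{1}{Q}{0}{P}$, compute
\begin{equation*}
\psmat{1}{Q}{0}{P}\,W \;=\; \pmat{\alpha d_0 + Q\gamma n}{\beta + Q\delta d_0}{P\gamma n}{P\delta d_0}.
\end{equation*}
Since $\gcd(P, n) = \gcd(P, d_0) = 1$ (using $\p \nmid \n$ and $\mfd \mid \n$), we can solve for $Q' \in A$ with $\deg Q' < \deg P$ such that $\alpha d_0 Q' \equiv -(\beta + Q\delta d_0) \pmod{P}$, or equivalently arrange that the above matrix equals $\eta\,W\psmat{1}{Q'}{0}{P}$ for a suitable $\eta = \psmat{\ast}{\ast}{cn}{\ast}$ (so $\eta \in \Gamma_0(\n)$) and a possibly reselected representative of $W$. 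I would verify that the bottom-left entry of $\eta$ indeed lies in $\n$ by using the invariant $\alpha\delta d_0^2 - \beta\gamma n = \zeta d_0$ together with the coprimality of $P$ with both $d_0$ and $n/d_0$. The resulting assignment $Q \mapsto Q'$ is a bijection on $\{Q \in A : \deg Q < \deg P\}$, because its inverse is obtained by the symmetric computation starting from $W\psmat{1}{Q'}{0}{P}$.

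Finally, the lone representative $\psmat{P}{0}{0}{1}$ must be matched with itself: one checks directly that
\begin{equation*}
\psmat{P}{0}{0}{1}\,W \;=\; \pmat{P\alpha d_0}{P\beta}{\gamma n}{\delta d_0}, \qquad W\,\psmat{P}{0}{0}{1} \;=\; \pmat{P\alpha d_0}{\beta}{P\gamma n}{\delta d_0},
\end{equation*}
and these differ by left multiplication by a matrix whose bottom-left entry, after clearing denominators using $\gcd(P, n) = 1$, lies in $\n$; hence the two expressions define the same right $\Gamma_0(\n)$-coset. The main obstacle is bookkeeping: ensuring throughout that (a) all auxiliary matrices used to intertwine $W$ with the Hecke representatives have integral entries and correct congruence properties, and (b) the two adjustments to $W$ (on the left of $\eta$ and on the right inside $W \psmat{1}{Q'}{0}{P}$) can be chosen consistently, which is the reason we need $\p \nmid \n$ so that $P$ is a unit modulo every prime factor of $n$. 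Summing the matched terms over $Q$ and combining with the $\psmat{P}{0}{0}{1}$ contribution yields $T_\p W = W T_\p$ on $S_{k,l}(\Gamma_0(\n))$.
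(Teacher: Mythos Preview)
The paper does not supply its own proof of this theorem; it is quoted as \cite[Theorem~1.1]{Val22}. Your coset-matching strategy is, however, exactly the method the paper uses for the companion result Theorem~\ref{U_P and A-L commutes} (commutation of $U_{\p_1}$ with $W_{\p^\alpha}^{(\n)}$), so your approach is consistent with the paper's toolkit.

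That said, your treatment of the representative $\psmat{P}{0}{0}{1}$ has a genuine gap. With $W=\psmat{\alpha d_0}{\beta}{\gamma n}{\delta d_0}$, the matrix $\eta=\bigl(\psmat{P}{0}{0}{1}W\bigr)\bigl(W\psmat{P}{0}{0}{1}\bigr)^{-1}$ has bottom-left entry $\gamma\delta n(1-P)/(\zeta P)$ and bottom-right entry $1+(P-1)\beta\gamma(n/d_0)/(\zeta P)$. Since $P\nmid n$ and $1-P\equiv 1\pmod P$, these lie in $A$ only when $P\mid\gamma$, which is not automatic; so your claim that the two products ``define the same right $\Gamma_0(\n)$-coset'' fails for a generic representative of $W_\mfd^{(\n)}$. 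A parallel hidden hypothesis $P\nmid\alpha$ is needed for your congruence $\alpha d_0\,Q'\equiv -(\beta+Q\delta d_0)\pmod P$ to be uniquely solvable. The repair is to first choose the representative $W$ with $P\mid\gamma$ (possible since $\gcd(P,d_0)=\gcd(P,n/d_0)=1$); the determinant relation $\alpha\delta d_0-\beta\gamma(n/d_0)=\zeta$ then forces $P\nmid\alpha$, and with this choice both of your matchings go through. But this is a step that must actually be carried out, not the ``direct check'' you assert.
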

We now study the commutativity of certain (partial) Atkin-Lehner operators and the $U_\p$-operator.  
The following result can be thought of as a generalization of Theorem~\ref{T_P and A-L commutes} to the $U_\p$-operator. Note that, Theorem~\ref{U_P and A-L commutes} holds for any integral ideal $\n$.
\begin{thm}
\label{U_P and A-L commutes}
Assume that $\p^\alpha|| \n$ for some $\alpha\in \N$.
For all prime divisors $\p_1$ of $\n$ with $\p_1 \neq \mfp$,  the actions of $U_{\p_1}W_{\p^\alpha}^{(\n)}$  and $W_{\p^\alpha}^{(\n)}U_{\p_1}$ 
on $S_{k,l}(\Gamma_0(\n)) $ are equal.
\end{thm}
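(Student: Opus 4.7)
The plan is to prove the operator identity by matching coset representatives. Fix a representative $W := \psmat{P^\alpha}{b}{n}{P^\alpha d}$ of $W_{\p^\alpha}^{(\n)}$ with $P^{2\alpha}d - nb = P^\alpha$. Writing $n = P^\alpha m$ with $(P,m) = 1$, the defining relation becomes $P^\alpha d - mb = 1$, and since $\p_1 \mid \n$ with $\p_1 \ne \p$, we have $P_1 \mid m$.

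The central step is to establish the following key identity: for each $Q \in A$ with $\deg Q < \deg P_1$, there exist $\gamma_Q \in \Gamma_0(\n)$ and $Q' \in A$ with $\deg Q' < \deg P_1$ such that
\[
W \cdot \psmat{1}{Q}{0}{P_1} \;=\; \gamma_Q \cdot \psmat{1}{Q'}{0}{P_1} \cdot W.
\]
I would obtain this by computing $M := W \cdot \psmat{1}{Q}{0}{P_1} \cdot W^{-1}$ via the adjugate formula for $W^{-1}$, verifying that $M$ lies in $M_2(A)$ with lower-left entry divisible by $n$, and then factoring $M$ uniquely in the form $\gamma_Q \cdot \psmat{1}{Q'}{0}{P_1}$. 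Comparing top-row entries of the factorization forces the congruence $Q' \equiv P^\alpha Q - b \pmod{P_1}$, pinning down $Q'$ uniquely in the desired range. Every divisibility check reduces to the two ingredients $P^\alpha d - mb = 1$ and $P_1 \mid m$.

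With the key identity in hand, I observe that the correspondence $Q \mapsto Q'$ is a bijection of $\{Q \in A : \deg Q < \deg P_1\}$ onto itself, because multiplication by $P^\alpha$ is invertible modulo $P_1$ (this is precisely where $\p_1 \ne \p$ is needed). Applying the slash operator and using $f|_{k,l}\gamma_Q = f$, summing over $Q$ yields
\[
\sum_{\deg Q < \deg P_1} f|_{k,l} W \cdot \psmat{1}{Q}{0}{P_1} \;=\; \sum_{\deg Q' < \deg P_1} f|_{k,l} \psmat{1}{Q'}{0}{P_1} \cdot W.
\]
Multiplying by $P_1^{k-l}$ converts this into $U_{\p_1}(f|_{k,l} W) = (U_{\p_1} f)|_{k,l} W$, which is the claimed operator equality.

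The main obstacle is the entry-by-entry bookkeeping when verifying the key factorization: one has to check that the computed matrix $M$ actually lies in $M_2(A)$, has its $(2,1)$-entry in $\n$, and that solving for the uniquely determined $Q'$ produces an integral $\gamma_Q$. All of these reduce to the single arithmetic fact $P_1 \mid m$, which is available precisely because $\p_1 \ne \p$; once this is respected the remainder of the computation is routine.
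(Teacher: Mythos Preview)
Your proposal is correct and follows essentially the same strategy as the paper: both arguments establish commutativity by matching the coset representatives $W\psmat{1}{Q}{0}{P_1}$ and $\psmat{1}{Q'}{0}{P_1}W$ up to left multiplication by an element of $\Gamma_0(\n)$, with the required $Q'$ determined by a congruence modulo $P_1$ that is solvable precisely because $(P^\alpha,P_1)=1$. The only cosmetic difference is that you package the computation via $M = W\psmat{1}{Q}{0}{P_1}W^{-1}$ and factor $M$, whereas the paper writes out the matrix equation $\psmat{1}{Q}{0}{P_1}W = \gamma\, W\psmat{1}{Q'}{0}{P_1}$ directly and solves for the entries; the underlying divisibility checks (reducing to $P^\alpha d - mb = 1$ and $P_1\mid m$) are identical.
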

\begin{proof}
By definition we have
\begin{equation*}
P_1^{l-k}U_{\p_1}W_{\p^\alpha}^{(\n)} = \sum_{\substack{Q\in A\\ \deg Q < \deg P_1}}\psmat{P^\alpha}{b}{n}{P^\alpha d}\psmat{1}{Q}{0}{P_1}= \sum_{\substack{Q\in A\\ \deg Q < \deg P_1}} \psmat{P^\alpha}{P^\alpha Q+bP_1}{n}{nQ+P^\alpha P_1d},
\end{equation*}
\begin{equation*}
P_1^{l-k}W_{\p^\alpha}^{(\n)}U_{\p_1} = \sum_{\substack{Q\in A\\ \deg Q < \deg P_1}}\psmat{1}{Q}{0}{P_1}\psmat{P^\alpha}{b}{n}{P^\alpha d}= \sum_{\substack{Q\in A\\ \deg Q < \deg P_1}} \psmat{P^\alpha+Q n}{b+P^\alpha Q d}{nP_1}{P^\alpha P_1d}.
\end{equation*}
To prove the proposition, it suffices to show that for any $Q\in A$ with $\deg Q<\deg P_1$, there exists a unique $Q^\prime\in A$ with $\deg Q^\prime<\deg P_1$ such that 
\begin{equation}
\label{matrix relation to show U_p and A-L commutes}
\psmat{P^\alpha+Q n}{b+P^\alpha Q d}{nP_1}{P^\alpha P_1d}= \psmat{x}{y}{z}{w}\psmat{P^\alpha}{P^\alpha Q^\prime+bP_1}{n}{nQ^\prime+P^\alpha P_1d},
\end{equation}
for some $\psmat{x}{y}{z}{w}\in \Gamma_0(\n)$. For any $Q,Q^\prime\in A$, \eqref{matrix relation to show U_p and A-L commutes} implies $x,w\in A$, $z\in \n$ and 
\begin{equation}
\label{equation for y}
-P_1 y= P^{\alpha}Q^\prime -P^{\alpha}Qd -b  + (n Q Q^\prime+bP_1 + \frac{n}{P^\alpha} bP_1 Q).
\end{equation}
Thus we are reduced to show that for any $Q\in A$ with $\deg Q<\deg P_1$, there exists a unique $Q^\prime\in A$ with $\deg Q^\prime<\deg P_1$  such that $y\in A$. 

Since $P_1|n$, we have $P_1|(n Q Q^\prime+bP_1 + \frac{n}{P^\alpha} bP_1 Q)$. Now it is enough to show that there exists a unique $Q^\prime\in A$ with $\deg Q^\prime<\deg P_1$ such that $P_1\mid P^\alpha(Q^\prime - Qd)-b$. 

Recall that, 
$P^\alpha d-b\frac{n}{P^\alpha}=1$. Since $P_1$ divides $\frac{n}{P^{\alpha}}$,
we get $QP^\alpha d \equiv Q \pmod {P_1}$ for any $Q \in A.$
So, it is enough to show that there exists a unique $Q^\prime\in A$
such that $P_1\mid P^\alpha  Q^\prime - (Q+b)$. Since $(P^\alpha,P_1)=1$,  the congruence $P^\alpha  f(X) \equiv  (Q+b) \pmod {P_1}$  has a unique solution in $A$ with $\deg(f(X))<\deg P_1$.
We are done.
\end{proof}
We are now ready to state the main theorem of this section.   

\begin{thm}
\label{Oldspace and Newspace are stable under Hecke operators}
The spaces $S_{k,l}^{\old}(\Gamma_0(\n))$, $S_{k,l}^{\new}(\Gamma_0(\n))$ are invariant under  
the action of the Hecke operators $T_\mfp$ for  $\mfp \nmid \n$ and $U_\p$ for $\p\mid \n$.
\end{thm}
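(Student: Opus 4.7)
The plan is to handle the oldspace and newspace separately, and within each case to split on whether the Hecke operator corresponds to a prime dividing $\n$. The commutativity relations from \S\ref{Certain important operators section}, together with Theorems~\ref{T_P and A-L commutes} and~\ref{U_P and A-L commutes}, do most of the work; two direct coset computations handle the remaining ``diagonal'' cases.

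\textbf{Oldspace.} It suffices to verify stability on each summand $(\delta_1,\delta_P)((S_{k,l}(\Gamma_0(\n/\p)))^2)$. One-line matrix checks (using that $Q\mapsto PQ$ permutes residues modulo the relevant generator) yield $T_\mfq\delta_P=\delta_P T_\mfq$ for $\mfq\nmid\n$ and $U_{\p_1}\delta_P=\delta_P U_{\p_1}$ for $\p_1\mid\n$ with $\p_1\neq\p$. Since $T_\mfq$ and such $U_{\p_1}$ also stabilize $S_{k,l}(\Gamma_0(\n/\p))$, they preserve the summand. For the diagonal case $\p_1=\p$, I would use
$$ U_\p f \;=\; T_\p f - P^{k-l}\delta_P f, \qquad f\in S_{k,l}(\Gamma_0(\n/\p)),$$
both of whose terms are oldforms, together with the short computation $U_\p\delta_P g=q^{\deg P}P^l g=0$, which vanishes because $C$ has characteristic $p$.

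\textbf{Newspace.} The engine is the trace formula of Proposition~\ref{relation between Trace Atkin_Lehner and U_p operators} at $\alpha=1$, namely $\Tr^\n_{\n/\p}=\Id+P^{-l}U_\p(\,\cdot\,|W_\p^{(\n)})$, combined with $\Tr'^\n_{\n/\p}(\,\cdot\,)=\Tr^\n_{\n/\p}(\,\cdot\,|W_\p^{(\n)})$. For $T_\mfq$ with $\mfq\nmid\n$, and for $U_{\p_1}$ with $\p_1\mid\n$ and $\p_1\neq\p$, I would expand both sides of the commutation $\bullet\,\Tr^\n_{\n/\p}=\Tr^\n_{\n/\p}\,\bullet$ via this formula and conclude by combining the commutations of $\bullet$ with $U_\p$ (Proposition~\ref{U_p, U_Q commutes and U_P, T_Q commutes}) and with $W_\p^{(\n)}$ (Theorems~\ref{T_P and A-L commutes} and~\ref{U_P and A-L commutes} respectively). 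Applying the same argument to $f|W_\p^{(\n)}$ handles $\Tr'^\n_{\n/\p}$. For $f$ in the newspace, the vanishing of both traces on $\bullet f$ is then immediate.

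The genuine obstacle is the last case: $U_\p$ acting on newforms with $\p_1=\p$, where no naive commutation with $\Tr^\n_{\n/\p}$ or $\Tr'^\n_{\n/\p}$ is available. The plan is to exploit the defining relations of the newspace itself. Writing out $\Tr^\n_{\n/\p}(f|W_\p^{(\n)})=0$ via the trace formula and the involution Lemma~\ref{Atkin-Lehner operator is an involution} produces the key identity $U_\p f=-P^{k-l}(f|W_\p^{(\n)})$. Applying $W_\p^{(\n)}$ and Lemma~\ref{Atkin-Lehner operator is an involution} once more yields $(U_\p f)|W_\p^{(\n)}=-P^l f$, and substituting back into the trace formula gives
$$ \Tr^\n_{\n/\p}(U_\p f)=U_\p f+P^{-l}U_\p(-P^l f)=0,\qquad \Tr'^\n_{\n/\p}(U_\p f)=\Tr^\n_{\n/\p}(-P^l f)=-P^l\Tr^\n_{\n/\p}(f)=0,$$
so $U_\p f$ again lies in $S_{k,l}^{\new}(\Gamma_0(\n))$, completing the proof.
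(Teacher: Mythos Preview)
Your argument is correct and follows essentially the same architecture as the paper's proof: split by prime divisor $\p_1\mid\n$, and for $\p_1\neq\p$ use the commutation of the Hecke operator with $U_{\p_1}$ and with $W_{\p_1}^{(\n)}$ (via Proposition~\ref{U_p, U_Q commutes and U_P, T_Q commutes}, Theorems~\ref{T_P and A-L commutes} and~\ref{U_P and A-L commutes}, and the trace formula of Proposition~\ref{relation between Trace Atkin_Lehner and U_p operators}) to push the operator through $\Tr^{\n}_{\n/\p_1}$, $\Tr'^{\n}_{\n/\p_1}$, and $\delta_{P_1}$.

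The one substantive difference is in the diagonal case $\p_1=\p$. The paper simply invokes \cite[Proposition~2.15]{BV20} for both the $\p$-old and $\p$-new stability under $U_\p$. You instead supply self-contained arguments: for oldforms, the identity $U_\p f=T_\p f-P^{k-l}\delta_P f$ together with the characteristic-$p$ vanishing $U_\p\delta_P g=q^{\deg P}P^l g=0$; for newforms, you extract from $\Tr'^{\n}_{\n/\p}(f)=0$ the relation $U_\p f=-P^{k-l}(f|W_\p^{(\n)})$, whence $(U_\p f)|W_\p^{(\n)}=-P^l f$, and feed this back into the trace formula. Both computations are correct (the second is in fact exactly what underlies the cited result), so your proof is more self-contained than the paper's at no real cost. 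A minor cosmetic point: where you verify $U_{\p_1}\delta_P=\delta_P U_{\p_1}$ by a direct coset check, the paper instead writes $\delta_{P_1}\varphi=\varphi|W_{\p_1}^{(\n)}$ via~\eqref{delta_P and A-L together} and then appeals to Theorem~\ref{U_P and A-L commutes}; the content is identical.
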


\begin{proof}
Let $\p$ be a prime ideal of $A$ such that $\p\mid \n$. We first  show that the space $S_{k,l}^{\new}(\Gamma_0(\n))$ is stable under the $U_\p$-operator.
Let $\p_1 \ne \p$ be a prime divisor of $\n$ and $f\in S_{k,l}^{\p_1-\new}(\Gamma_0(\n))$.
Theorem~\ref{U_P and A-L commutes} (resp., Proposition~\ref{U_p, U_Q commutes and U_P, T_Q commutes})
implies that the $U_\p$-operator commutes with the $W_{\p_1}^{(\n)}$-operator  (resp., the $U_{\p_1}$-operator). 
Since $f\in S_{k,l}^{\p_1-\new}(\Gamma_0(\n))$, from Proposition~\ref{relation between Trace, A-L and U_P operator for level pm},
we obtain 
$$\Tr_\frac{\n}{\p_1}^{\n}(U_\p(f)) = U_\p(f)+ P_1^{-l}U_{\p_1}(U_\p(f)|W_\p^{(\n)}) =U_\p(\Tr_\frac{\n}{\p_1}^{\n}(f))=0.$$ 
A similar argument shows that ${\Tr^\prime}_\frac{\n}{\p_1}^{\n}(U_\p(f))=0$.
Thus $S_{k,l}^{\p_1-\new}(\Gamma_0(\n))$ is stable under the $U_\p$-operator.
Since the space $S_{k,l}^{\p-\new}(\Gamma_0(\n))$ is stable under the action of the $U_{\p}$-operator (cf.~\cite[Proposition 2.15]{BV20}), 
the space $S_{k,l}^{\new}(\Gamma_0(\n))$ is stable under the action of the $U_\p$-operator.

Next,  we show that the space $S_{k,l}^{\old}(\Gamma_0(\n))$ is stable under the action of the $U_\p$-operator.
Let $\p_1 \ne \p$ be a prime divisor of $\n$.
Let $\psi, \varphi\in S_{k,l}(\Gamma_0(\frac{\n}{\p_1}))$.
Since $\p|\frac{\n}{\p_1}$, we have  $U_\p(\psi), U_\p(\varphi)\in S_{k,l}(\Gamma_0(\frac{\n}{\p_1}))$. Moreover,~\eqref{delta_P and A-L together} and Theorem~\ref{U_P and A-L commutes} yield 
$$U_\p(\delta_{P_1} \varphi) = U_\p(\varphi|W_{P_1}^{(\n)})  = (U_\p(\varphi))|W_{P_1}^{(\n)}
                             = \delta_{P_1}(U_\p(\varphi)).$$
Hence for all $\p_1\mid \n$ with $\p_1\ne \p$ we have $U_\p(\psi +\delta_{P_1}\varphi)= 
U_\p(\psi) + \delta_{P_1} U_\p(\varphi)$ with $ U_\p(\psi), U_\p(\varphi) \in S_{k,l}(\Gamma_0(\frac{\n}{\p_1})).$
Since the space $S_{k,l}^{\p-\old}(\Gamma_0(\n))$ is stable under the action of the $U_\p$-operator (cf.~\cite[Proposition 2.15]{BV20}), the space $S_{k,l}^{\old}(\Gamma_0(\n))$ is stable under the action of the $U_\p$-operator.

An argument similar to the above would also imply that 
the spaces $S_{k,l}^{\new}(\Gamma_0(\n))$ and $S_{k,l}^{\old}(\Gamma_0(\n))$ are stable 
under the $T_\p$-operator for $\p\nmid \n$. 
\end{proof}

\begin{cor}
\label{Basis consisting of eigenforms of U_P operators}
The set of  $U_\p$-operators (for $\p \mid \n$) are simultaneously  diagonalizable on $S_{k,l}^{\new}(\Gamma_0(\n))$.
\end{cor}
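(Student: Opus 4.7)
The plan is to reduce the corollary to three standard ingredients: the already-proved stability of $S_{k,l}^{\new}(\Gamma_0(\n))$ under each $U_\p$, the known diagonalizability of $U_\p$ on the larger space $S_{k,l}^{\p-\new}(\Gamma_0(\n))$, and the fact that commuting families of diagonalizable operators are simultaneously diagonalizable.

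First I would unwind the definition of newforms. Since $\n$ is square-free, for any prime $\p \mid \n$ we can write $\n = \p\m$ with $\p \nmid \m$, and the $\p$-newform space is $S_{k,l}^{\p-\new}(\Gamma_0(\n)) = \ker(\Tr_{\n/\p}^{\n}) \cap \ker({\Tr^\prime}_{\n/\p}^{\n})$. Thus Definition~\ref{definition of newforms} gives directly
\begin{equation*}
S_{k,l}^{\new}(\Gamma_0(\n)) \;=\; \bigcap_{\p \mid \n} S_{k,l}^{\p-\new}(\Gamma_0(\n)),
\end{equation*}
so in particular $S_{k,l}^{\new}(\Gamma_0(\n)) \subseteq S_{k,l}^{\p-\new}(\Gamma_0(\n))$ for every $\p \mid \n$.

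Next, fix $\p \mid \n$. By the result from~\cite[Remark 2.17]{BV20} recalled in \S\ref{Discussion on (2)}, the operator $U_\p$ is diagonalizable on $S_{k,l}^{\p-\new}(\Gamma_0(\n))$. By Theorem~\ref{Oldspace and Newspace are stable under Hecke operators}, the subspace $S_{k,l}^{\new}(\Gamma_0(\n))$ is $U_\p$-invariant. A diagonalizable operator restricted to an invariant subspace is diagonalizable (its minimal polynomial divides the minimal polynomial of the ambient operator, which splits with simple roots). Hence $U_\p$ acts diagonalizably on $S_{k,l}^{\new}(\Gamma_0(\n))$ for every $\p \mid \n$.

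Finally, the family $\{U_\p : \p \mid \n\}$ consists of pairwise commuting operators by Proposition~\ref{U_p, U_Q commutes and U_P, T_Q commutes}. A finite family of pairwise commuting diagonalizable endomorphisms of a finite-dimensional vector space is simultaneously diagonalizable (one inducts on the number of operators, decomposing the ambient space into eigenspaces of one operator and using that the remaining operators preserve each eigenspace and restrict diagonalizably). Since $S_{k,l}^{\new}(\Gamma_0(\n))$ is finite-dimensional, this yields the corollary. I do not see any real obstacle here: once Theorem~\ref{Oldspace and Newspace are stable under Hecke operators} is in hand, the corollary is a formal consequence of the $\p$-local diagonalizability results of~\cite{BV20} together with standard linear algebra.
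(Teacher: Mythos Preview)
Your proposal is correct and follows essentially the same approach as the paper's own proof: both use \cite[Remark 2.17]{BV20} for diagonalizability of $U_\p$ on $S_{k,l}^{\p-\new}(\Gamma_0(\n))$, invoke Theorem~\ref{Oldspace and Newspace are stable under Hecke operators} to get $U_\p$-invariance of $S_{k,l}^{\new}(\Gamma_0(\n))$, and then appeal to Proposition~\ref{U_p, U_Q commutes and U_P, T_Q commutes} together with the standard linear-algebra fact about commuting diagonalizable operators. Your write-up simply makes explicit a couple of details (the minimal-polynomial argument and the inclusion $S_{k,l}^{\new}\subseteq S_{k,l}^{\p-\new}$) that the paper leaves implicit.
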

\begin{proof}[Proof of Corollary~\ref{Basis consisting of eigenforms of U_P operators}]
Let $\p$ be a prime ideal of $A$ such that $\p \mid \n$.
By~\cite[Remark 2.17]{BV20}, the $U_\p$-operator is diagonalizable on $S_{k,l}^{\p-\new}(\Gamma_0(\n))$.
By Theorem~\ref{Oldspace and Newspace are stable under Hecke operators},
the space $S_{k,l}^{\new}(\Gamma_0(\n))$ is an $U_\p$-invariant subspace of $S_{k,l}^{\p-\new}(\Gamma_0(\n))$,
hence the $U_\p$-operator is also diagonalizable on $S_{k,l}^{\new}(\Gamma_0(\n))$. Now, the corollary follows
from Proposition~\ref{U_p, U_Q commutes and U_P, T_Q commutes} and the fact that a commuting set of diagonalizable operators on a finite dimensional vector space
are simultaneously diagonalizable.
\end{proof}

We conclude this article with a remark that $S_{k,l}^{\old}(\Gamma_0(\n))\cap S_{k,l}^{\new}(\Gamma_0(\n)) =\{ 0 \}$ may happen only for $l\ne 1$, because of the following proposition, which is in the spirit of Proposition~\ref{counter example for direct sum decomposition for square free p-part}. As a result, one may have to reformulate the definition of oldforms and newforms of level $\p\m$ for $l=1$.

\begin{prop}
\label{counter_example_type_1 for oldforms and newforms}
For any two distinct prime ideals $\p, \q$ generated by monic irreducible polynomials $P, Q$, respectively, the intersection
$S_{2,1}^{\old}(\Gamma_0(\p \q))\cap S_{2,1}^{\new}(\Gamma_0(\p\q))\ne \{0\}.$ Furthermore, for any $x\in \N$, 
$S_{2{q^x},1}^{\old}(\Gamma_0(\p \q))\cap S_{2{q^x},1}^{\new}(\Gamma_0(\p \q))\ne \{0\}$.
\end{prop}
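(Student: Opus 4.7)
My candidate for the intersection is the form already singled out in Remark~\ref{counter example higher powers remark}. For each $x\geq 0$, set
\begin{equation*}
f_x := E_Q^{q^x} - P^{q^x-1}\delta_P\bigl(E_Q^{q^x}\bigr) \in S_{2q^x,1}(\Gamma_0(\p\q)),
\end{equation*}
with the convention that $x=0$ recovers the weight-$(2,1)$ statement. By construction $f_x$ lies in the $\p$-oldspace, hence in $S_{2q^x,1}^\old(\Gamma_0(\p\q))$, and Proposition~\ref{counter example for direct sum decomposition for square free p-part} together with Remark~\ref{counter example higher powers remark} already place $f_x$ in the $\p$-newspace $S_{2q^x,1}^{\p-\new}(\Gamma_0(\p\q))$. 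Non-vanishing follows at once from the $u$-expansions at $\infty$: $E_Q^{q^x}$ has leading term $u^{q^x}$ while $\delta_P(E_Q^{q^x})$ has a strictly higher leading exponent, so the two summands are linearly independent. What remains is to upgrade $\p$-newness to the full newform condition of Definition~\ref{definition of newforms}, i.e., to check
\begin{equation*}
\Tr_\p^{\p\q}(f_x) = 0 \quad\text{and}\quad {\Tr^\prime}_\p^{\p\q}(f_x) = 0.
\end{equation*}

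I will apply Proposition~\ref{relation between Trace, A-L and U_P operator for level pm} with $\n=\p\q$, $\m=\p$, $\alpha=1$ to get $\Tr_\p^{\p\q}(f_x) = f_x + Q^{-1}U_\q\bigl(f_x|W_\q^{(\p\q)}\bigr)$, which reduces matters to computing $f_x|W_\q^{(\p\q)}$ and $U_\q(f_x)$. For the first, a direct Bezout manipulation produces a factorization $W_\q^{(\p\q)} = W_\q^{(\q)}\gamma$ with $\gamma\in\Gamma_0(\q)$, so that $\Gamma_0(\q)$-invariance of $E_Q^{q^x}$ reduces the computation to $E_Q^{q^x}|W_\q^{(\q)}$; combining $E_Q|_{2,1}W_\q^{(\q)} = -E_Q$ (cf.~\cite[Proposition 3.3]{DK1}) with the slash--Frobenius identity $g^{q^x}|_{kq^x,l}\gamma = (\det\gamma)^{-l(q^x-1)}(g|_{k,l}\gamma)^{q^x}$ yields $E_Q^{q^x}|W_\q^{(\p\q)} = -Q^{1-q^x}E_Q^{q^x}$. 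An analogous factorization $W_\p^{(\p\q)} = \alpha\cdot\psmat{P}{0}{0}{1}$ with $\alpha\in\Gamma_0(\q)$ gives $E_Q^{q^x}|W_\p^{(\p\q)} = \delta_P(E_Q^{q^x})$, and Lemma~\ref{A-L commutes among themselves} then propagates the same scaling to $\delta_P(E_Q^{q^x})|W_\q^{(\p\q)} = -Q^{1-q^x}\delta_P(E_Q^{q^x})$, so $f_x|W_\q^{(\p\q)} = -Q^{1-q^x}f_x$. For the $U_\q$-eigenvalue I first establish $U_\q(E_Q) = QE_Q$ by feeding $E_Q|W_\q^{(\q)} = -E_Q$ into $\Tr_1^\q(E_Q) = 0$ (the latter holding because $S_{2,1}(\GL_2(A)) = \{0\}$); Frobenius compatibility $U_\q(g^{q^x}) = (U_\q g)^{q^x}$ and the commutation $U_\q\circ\delta_P = \delta_P\circ U_\q$ (a short reindexing using $(P,Q)=1$) upgrade this to $U_\q(f_x) = Q^{q^x}f_x$. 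Substituting both identities,
\begin{equation*}
\Tr_\p^{\p\q}(f_x) = f_x + Q^{-1}(-Q^{1-q^x})Q^{q^x}f_x = 0,
\end{equation*}
and ${\Tr^\prime}_\p^{\p\q}(f_x) = -Q^{1-q^x}\Tr_\p^{\p\q}(f_x) = 0$ follows immediately.

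The main obstacle I foresee is the slash-operator bookkeeping when raising to the $q^x$-th power, since for $\gamma\notin\GL_2(A)$ the factor $(\det\gamma)^{l(q^x-1)}$ in the displayed Frobenius identity is non-trivial and produces the scalings $Q^{1-q^x}$ and $P^{q^x-1}$ scattered through the argument. The reassuring point is that the coefficient $P^{q^x-1}$ built into $f_x$ (inherited from Remark~\ref{counter example higher powers remark}) is precisely what cancels the $P$-factors in the $\p$-new verification, while the $Q^{1-q^x}$ from $W_\q^{(\p\q)}$ cancels exactly against the $Q^{q^x}$ from $U_\q$ and the $Q^{-1}$ prefactor in Proposition~\ref{relation between Trace, A-L and U_P operator for level pm}, yielding the clean vanishing above.
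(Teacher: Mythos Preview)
Your proposal is correct and uses the same candidate $f_x = E_Q^{q^x} - P^{q^x-1}\delta_P(E_Q^{q^x})$ as the paper, together with the same essential inputs ($E_Q|W_\q^{(\q)}=-E_Q$ and $S_{2,1}(\GL_2(A))=0$). The organization of the $\q$-new verification differs slightly: the paper splits $f_0$ as $E_Q - \delta_P E_Q$, applies Corollary~\ref{Tr of pm to m of level m} to each piece to get $\Tr_\p^{\p\q}(E_Q)=\Tr_1^\q(E_Q)\in S_{2,1}(\GL_2(A))=\{0\}$ directly, and then handles $\delta_P E_Q$ via the commutation $W_\p^{(\p\q)}U_\q=U_\q W_\p^{(\p\q)}$; you instead keep $f_x$ intact, show it is a simultaneous eigenvector for $W_\q^{(\p\q)}$ and $U_\q$, and feed the eigenvalues into Proposition~\ref{relation between Trace, A-L and U_P operator for level pm}. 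Your route has the mild advantage of treating all $x\ge 0$ uniformly (the paper dispatches $x\ge 1$ with ``a similar argument''), at the cost of the extra step of extracting $U_\q(E_Q)=QE_Q$ from $\Tr_1^\q(E_Q)=0$ rather than using that vanishing directly.
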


\begin{proof}
We now show that   $E_{Q}- \delta_{P} E_{Q} \in S_{2,1}^{\old}(\Gamma_0(\p\q))\cap S_{2,1}^{\new}(\Gamma_0(\p \q)).$
By definition, $0\ne E_{Q}- \delta_{P} E_{Q} \in S_{2,1}^{\old}(\Gamma_0(\p \q))$.
From~\eqref{To show E_T-delta_p E_T is an old form trace 0} and~\eqref{To show E_T-delta_p E_T is an old form trace A-L 0}, we have
$ \Tr_{\q}^{\p \q}(E_{Q}- \delta_{P} E_{Q})=0=\Tr_{\q}^{\p \q}((E_{Q}- \delta_{P} E_{Q})|W_{\p}^{(\p \q)}).$ Since $W_{\p}^{(\p \q)}W_{\q}^{(\p \q)} = W_{\q}^{(\p \q)} W_{\p}^{(\p \q)}$, $W_{\p}^{(\p \q)} U_{\q}= U_{\q} W_{\p}^{(\p \q)}$, by Proposition~\ref{relation between Trace, A-L and U_P operator for level pm} and~\eqref{delta_P and A-L together}, we get
\begin{align*}
\Tr_{\p}^{\p \q}(E_{Q}- \delta_{P} E_{Q})
&=\Tr_{\p}^{\p \q}(E_{Q})- \Tr_{\p}^{\p \q}(E_{Q})|W_{\p}^{(\p \q)}  \\
&= \Tr_1^{\q}(E_{Q})- \Tr_1^{\q}(E_{Q})|W_{\p}^{(\p \q)}\quad (\mathrm{cf.~Corollary}~\ref{Tr of pm to m of level m})\\
&=0 \qquad (\mathrm{since}~ M_{2,1}(\GL_2(A))=0).
\end{align*}
Since $E_{Q}|W_{\q}^{(\p \q)}=E_{Q}|W_{\q}^{(\q)}=-E_{Q}$ (cf.~\cite[Proposition 3.3]{DK1})), we have
\begin{align*}
\Tr_{\p}^{\p \q}((E_{Q}- \delta_{P} E_{Q})|W_{\q}^{(\p \q)})
&= \Tr_{\p}^{\p \q}((E_{Q}|W_{\q}^{(\p \q)})-E_{Q}|W_{\p}^{(\p \q)}W_{\q}^{(\p \q)})\\
&=\Tr_{\p}^{\p \q}((E_{Q}|W_{\q}^{(\p \q)})-(E_{Q}|W_{\q}^{(\p \q)})|W_{\p}^{(\p \q)})\\
&=\Tr_{\p}^{\p \q}(-E_{Q}+(E_{Q}|W_{\p}^{(\p \q)})) 
=\Tr_{\p}^{\p \q}(-E_{Q}+\delta_{P} E_{Q})=0.
\end{align*}
Hence, $E_{Q}- \delta_{P} E_{Q} \in S_{2,1}^{\new}(\Gamma_0(\p \q))$. 
A similar argument shows that $0\ne E_{Q}^{q^x}-{{P}^{q^x-1}} \delta_{P} E_{Q}^{q^x} \in S_{2{q^x},1}^{\old}(\Gamma_0(\p \q))\cap S_{2{q^x},1}^{\new}(\Gamma_0(\p_1 \q))$ for $x\in \N$.
\end{proof}

\bibliographystyle{plain, abbrv}

\end{document}